\newcommand{\R}{\mathbb{R}}
\newcommand{\N}{\mathbb{N}}
\newtheorem{theorem}{Theorem}[section]
\newtheorem{proposition}[theorem]{Proposition}
\newtheorem{lemma}[theorem]{Lemma}
\newtheorem{corollary}[theorem]{Corollary}
\theoremstyle{definition}
\newtheorem{remark}[theorem]{Remark}
\newcommand{\E}{\mathbb{E}}
\renewcommand{\P}{\mathbb{P}}
\newcommand{\V}{\mathrm{\mathbb{V}ar}}
\numberwithin{equation}{section}
\newcommand{\eps}{\varepsilon}
\begin{document}

\title{On a variance dependent Dvoretzky-Kiefer-Wolfowitz inequality}

\author{Daniel Bartl}
\address{Faculty of Mathematics, University of Vienna, Austria}
\email{daniel.bartl@univie.ac.at}
\author{Shahar Mendelson}
\address{ The Australian National University, Centre for Mathematics and its Applications}
\email{shahar.mendelson@anu.edu.au}
\keywords{Empirical distribution function, Kolmogorov-Smirnov statistic}
\date{\today}

\begin{abstract}
	Let $X$ be a real-valued random variable with distribution function $F$. Set $X_1,\dots, X_m$ to be independent copies of $X$ and let $F_m$ be the corresponding empirical distribution function.
	We show that there are absolute constants $c_0$ and  $c_1$ such that if $\Delta \geq c_0\frac{\log\log m}{m}$, then with probability at least $1-2\exp(-c_1\Delta m)$, for every $t\in\R$ that satisfies $F(t)\in[\Delta,1-\Delta]$, 
\[  |F_m(t) - F(t) | \leq  \sqrt{\Delta \min\{F(t),1-F(t)\} } .
\]
	Moreover, this estimate is optimal up to the multiplicative constants $c_0$ and $c_1$.
\end{abstract}

\maketitle
\setcounter{equation}{0}
\setcounter{tocdepth}{1}

\section{Introduction}
Let $X$ be a real-valued random variable and set $X_1,\dots,X_m$ to be independent copies of $X$. Denote by $F(t)=\P(X\leq t)$ the \emph{distribution function} of $X$ and let
\[ F_{m}(t)=\frac{1}{m}\sum_{i=1}^m 1_{(-\infty,t]}(X_i)\]
be the \emph{empirical distribution function}.
A fundamental question  in probability theory and (nonparametric) statistics is whether and to what extent the empirical distribution function $F_m$ can serve as a proxy for the true (unknown) distribution function $F$.
A cornerstone result in that direction is the celebrated \emph{Glivenko-Cantelli} theorem \cite{cantelli1933sulla,glivenko1933sulla}: $\sup_{t\in\R}|F_m(t)-F(t)|$ tends to zero almost surely as the sample size $m$ tends to infinity.
A powerful generalization of the Glivenko-Cantelli theorem is the \emph{Dvoretzky-Kiefer-Wolfowitz (DKW)} inequality  \cite{dvoretzky1956asymptotic} which describes the speed of convergence.
The following formulation of the DKW inequality is taken from  \cite{massart1990tight}.

\begin{theorem}
\label{thm:DKW}
For every $\Delta>0$, with probability at least $1-2\exp(-2\Delta m)$,
\[  \sup_{t\in\R} |F_m(t)  - F(t)  |  \leq  \sqrt\Delta.
\]
\end{theorem}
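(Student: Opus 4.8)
\emph{Reductions and the underlying finite event.} I would first pass to the one–sided uniform problem. Via the quantile transform one couples $X_i$ with independent $U_i\sim\mathrm{Unif}[0,1]$ so that, writing $U_m$ for the uniform empirical distribution function, $\sup_{t\in\R}|F_m(t)-F(t)|\le\sup_{u\in[0,1]}|U_m(u)-u|$ always (with equality when $F$ is continuous); hence it suffices to treat $F(u)=u$ on $[0,1]$. The reflection $u\mapsto 1-u$ shows that $D_m^+:=\sup_u(U_m(u)-u)$ and $D_m^-:=\sup_u(u-U_m(u))$ have the same law, so a union bound reduces the theorem to the \emph{one–sided} estimate $\P(D_m^+\ge\lambda)\le\exp(-2m\lambda^2)$ with $\lambda:=\sqrt\Delta$ — the factor $2$ in the probability bound is precisely this union bound — and when $2m\lambda^2\le\log 2$ the asserted bound exceeds $1$, so I may assume $m\lambda^2>\tfrac{\log 2}{2}$. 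Ordering the sample $U_{(1)}\le\cdots\le U_{(m)}$ gives $D_m^+=\max_{1\le k\le m}\big(\tfrac km-U_{(k)}\big)$, hence
\[ \big\{D_m^+\ge\lambda\big\}=\bigcup_{k\,:\,k/m\ge\lambda}\big\{U_{(k)}\le\tfrac km-\lambda\big\},\qquad \P\big(U_{(k)}\le\tfrac km-\lambda\big)=\P\big(\mathrm{Bin}(m,\tfrac km-\lambda)\ge k\big)\le e^{-2m\lambda^2} \]
by Hoeffding's inequality. A naive union bound over the (up to) $m$ events only yields $m\,e^{-2m\lambda^2}$; applying McDiarmid's inequality to $D_m^+$ together with $\E D_m^+=O(m^{-1/2})$ loses an $e^{O(\sqrt m\,\lambda)}$ factor in the exponent. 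Killing this polynomial (resp.\ subexponential) loss — obtaining the bound with the \emph{optimal} constant $2$ and no residual factor — is the entire difficulty.

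\emph{Main step: the exact distribution.} The plan is to use the closed form of the one–sided Kolmogorov--Smirnov law obtained by a reflection/ballot computation (Smirnov, Birnbaum--Tingey):
\[ \P\big(D_m^+\ge\lambda\big)=\lambda\sum_{0\le j\le m(1-\lambda)}\binom mj\Big(\tfrac jm+\lambda\Big)^{j-1}\Big(1-\lambda-\tfrac jm\Big)^{m-j}. \]
Writing $j=mq$ and applying Stirling, the $j$–th summand equals, up to a prefactor of order $m^{-1/2}$ times $\tfrac{\lambda}{q+\lambda}$, the quantity $\exp\!\big(mf(q)\big)$ with $f(q)=q\log\!\big(1+\tfrac\lambda q\big)+(1-q)\log\!\big(1-\tfrac\lambda{1-q}\big)$. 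A short calculation shows $f$ is maximized near $q=\tfrac12$, that $f(\tfrac12)=\tfrac12\log(1-4\lambda^2)\le-2\lambda^2$ (since $1-x\le e^{-x}$), and indeed $f(q)\le-2\lambda^2$ throughout, with $f(q)\le-2\lambda^2-8\lambda^2(q-\tfrac12)^2+\cdots$ near the peak. A Laplace–type evaluation then finishes it: the $\exp(mf(q))$ factors concentrate on a $q$–window of width $\sim(\lambda\sqrt m)^{-1}$, and on that window the $m^{-1/2}$ and $\lambda$ prefactors conspire so that the total is, up to lower order, $\exp(-2m\lambda^2)$. Carrying this out rigorously — constant exactly $2$, prefactor at most $1$, uniformly for $m\lambda^2>\tfrac{\log 2}{2}$, with the remaining near–trivial range treated separately — is Massart's analysis of the Birnbaum--Tingey sum; combined with the reductions above it proves the theorem.

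\emph{Main obstacle.} The sole but genuine obstacle is the tightness of this last step. The asymptotics only \emph{identify} the constant $2$: it is the exponent in $\P(\sup_{[0,1]}B^{\circ}>x)=e^{-2x^2}$ for a Brownian bridge $B^{\circ}$, the scaling limit of $\sqrt m\,D_m^+$, whereas the statement demands a clean finite–$m$ inequality with exactly that constant and nothing worse. This is precisely what separates the ``tight'' DKW inequality from the softer versions produced by union bounds, dyadic peeling, or bounded differences — each of which has the right quadratic rate $e^{-cm\lambda^2}$ but an unusable constant or a spurious prefactor — and it is exactly the point at which one has no choice but to feed in the exact distribution and control the resulting sum term by term and in aggregate.
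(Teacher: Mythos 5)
Your plan should first be set against what the paper actually does: the paper does not prove Theorem~\ref{thm:DKW} at all. It is stated as background, quoted verbatim from Massart's 1990 paper, and the proof lives entirely in that reference. So the only meaningful comparison is with Massart's argument --- which is precisely the route you sketch.

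Within that route, your reductions are correct and standard: the quantile coupling to the uniform case, the passage to the one-sided statistic $D_m^+$ with the factor $2$ in the probability coming from the union bound over the two tails, the dismissal of the trivial regime $2m\lambda^2\le\log 2$ (which is genuinely needed, since the one-sided bound with prefactor $1$ fails for very small $\lambda$), the representation of $D_m^+$ through order statistics, and the Birnbaum--Tingey closed form are all accurately stated; you also correctly diagnose why union bounds, peeling, or bounded differences cannot reach the constant $2$. The genuine gap is that the plan stops exactly where the proof has to start. The Stirling/Laplace analysis of the Birnbaum--Tingey sum, as described, only identifies the exponential rate asymptotically --- at best it yields $\P(D_m^+\ge\lambda)\le(1+o(1))\exp(-2m\lambda^2)$ --- whereas the theorem requires the clean inequality with prefactor at most $1$ for every $m$ and every $\lambda$ in the nontrivial range; controlling the polynomial prefactors, the boundary terms of the sum, and the values of $\lambda$ where the Gaussian approximation is not yet in force is the entire content of Massart's paper, and your proposal explicitly delegates that step (``is Massart's analysis''). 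So as a self-contained proof the proposal is incomplete at the single decisive point; as an account of how the result is established it is faithful to, and essentially identical with, the source the paper itself cites in lieu of a proof.
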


It is well known that Theorem \ref{thm:DKW} is optimal, but at the same time it is reasonable to expect that the deviation  $| F_m(t)  - F(t)  |$ is much smaller than $\sqrt{\Delta}$ when $F(t) $ is significantly smaller or significantly larger than $1/2$.
Specifically, let
\[
\sigma^2(t)
=\V\left( 1_{(-\infty,t]}(X) \right)
= F(t) (1-F(t)),
\]
and observe that
\[
\V\left( F_m(t)  \right)
= \frac{ \sigma^2(t) }{m}.
\]
Therefore, a natural conjecture is that the typical behaviour of $| F_m(t) - F(t) |$ is of the order of $\sigma(t)/\sqrt{m}$, and the purpose of this note is show that  such an estimate is (almost) true:

\begin{theorem} \label{thm:ratio}
	There are absolute constants $c_0$ and $c_1$ such that the following holds.
	For every
	\[ \Delta \geq c_0 \frac{ \log \log m }{m} ,\]
	 with probability at least $1-2\exp(-c_1\Delta m)$, for every $t \in \R$ that satisfies $\sigma^2(t)\geq \Delta$ we have
	\[
	\left| F_m(t) - F(t) \right|
	\leq    \sigma(t)  \sqrt{\Delta}.
	\]
\end{theorem}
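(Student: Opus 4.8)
The plan is to reduce the uniform bound over all $t$ with $\sigma^2(t) \geq \Delta$ to a union bound over finitely many "scales" of $p := F(t)$, and within each scale to use a sharp Bernstein-type deviation inequality together with a chaining/peeling argument in $t$. By the symmetry $p \leftrightarrow 1-p$ it suffices to control the left tail $p \in [\Delta, 1/2]$. Fix a scale and consider $t$ ranging over the level set $\{t : F(t) \in [2^{-k-1}, 2^{-k}]\}$ for $k$ with $2^{-k} \geq \Delta$, so there are only $O(\log(1/\Delta))$ scales. On such a set, $F_m(t) - F(t)$ is an average of i.i.d.\ centered Bernoulli$(p)$ variables with $p \asymp 2^{-k}$, and for a \emph{single} $t$ Bernstein's inequality gives $\P(|F_m(t)-F(t)| \geq \sigma(t)\sqrt{\Delta}) \leq 2\exp(-c \Delta m)$ — here the variance term $\sigma^2(t)/m$ and the Bernstein range term $\sigma(t)\sqrt\Delta/m$ both contribute a bound of the right order precisely because $\sigma^2(t) \geq \Delta$. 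The issue is upgrading this to hold simultaneously over all $t$ in the scale.

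For the uniformity in $t$, I would use the standard fact that $F_m(t) - F(t)$ has, as a process in $t$, only finitely many distinct values — it can jump only at the sample points $X_1,\dots,X_m$ — and is monotone between jumps in each of its two summands. Concretely, within a scale I would pick a deterministic grid $t_0 < t_1 < \dots < t_N$ with $F(t_{j}) - F(t_{j-1}) \approx \Delta$ (so $N \asymp 2^{-k}/\Delta$ points in scale $k$), apply the single-point Bernstein bound at each grid point, and then control the oscillation of $F_m - F$ between consecutive grid points. The oscillation of $F$ is at most $\Delta$ by construction; the oscillation of $F_m$ on $[t_{j-1}, t_j)$ is the number of sample points falling in that interval divided by $m$, which is Binomial$(m, \leq\Delta)$ and hence is $\lesssim \Delta$ except with probability $\exp(-c\Delta m)$ — here is where the hypothesis $\Delta \gtrsim \log\log m / m$ enters, since the total number of grid points across all scales is polylogarithmic in $m$ (roughly $\sum_k 2^{-k}/\Delta \asymp 1/\Delta$, times... ) and we need the union bound $\tfrac{1}{\Delta}\exp(-c\Delta m) \leq \exp(-c'\Delta m)$, i.e.\ $\log(1/\Delta) \lesssim \Delta m$, which holds once $\Delta m \gtrsim \log\log m$. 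Since the allowed slack $\sigma(t)\sqrt\Delta \asymp 2^{-k/2}\sqrt\Delta$ is much larger than $\Delta$ in every scale (as $2^{-k} \geq \Delta$), absorbing an $O(\Delta)$ oscillation into the bound costs only a constant factor.

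Putting the pieces together: on the complement of the bad event (total probability $\leq 2\exp(-c_1 \Delta m)$ after adjusting constants), for every $t$ with $\sigma^2(t) \geq \Delta$, choosing the grid point $t_j$ closest from below we get
\[
|F_m(t) - F(t)| \leq |F_m(t_j) - F(t_j)| + |F_m(t) - F_m(t_j)| + |F(t) - F(t_j)| \leq \sigma(t_j)\sqrt\Delta + C\Delta \leq C'\sigma(t)\sqrt\Delta,
\]
and rescaling $\Delta$ by the constant $C'$ (which only weakens $c_0, c_1$) yields the claimed clean inequality.

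The main obstacle I anticipate is the bookkeeping in the union bound over scales and grid points: one must verify that the number of events is small enough — polylogarithmic, not polynomial, in $m$ — so that the threshold on $\Delta$ is $\log\log m/m$ rather than $\log m/m$. This is exactly why a naive covering at the finest scale $\Delta$ uniformly (which would give $\sim 1/\Delta$ points and a $\log(1/\Delta) \lesssim \Delta m$ condition) is delicate near the boundary $p \approx \Delta$, and why the problem is genuinely harder than the classical DKW inequality where no localization is attempted. A secondary point requiring care is that Bernstein's inequality must be applied in its two-sided, variance-aware form and one should double-check that the subexponential (range) term never dominates given $\sigma^2(t)\geq\Delta$; this is immediate but worth stating cleanly. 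For optimality one exhibits a two-point (or geometric) distribution and a single well-chosen $t$ for which a matching lower bound on $|F_m(t)-F(t)|$ follows from the Paley–Zygmund inequality or a binomial small-ball estimate.
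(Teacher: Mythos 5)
Your overall skeleton---a dyadic decomposition of $p=F(t)$ into scales, a variance-aware Bernstein-type bound, and a union bound whose event count sets the threshold on $\Delta$---has the right shape, and it is indeed the shape of the paper's argument. But the crucial step, the union bound, does not close as you have arranged it, and the gap is precisely in the place you flag as ``delicate'' without actually resolving.

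You union-bound over grid points, and you count them yourself as $\sum_k 2^{-k}/\Delta \asymp 1/\Delta$. This is not polylogarithmic in $m$: with $\Delta \asymp \log\log m/m$ it is $\asymp m/\log\log m$. Your union bound then requires $\log(1/\Delta) \lesssim \Delta m$, which for $\Delta \asymp \log\log m/m$ would force $\log m \lesssim \log\log m$---false for large $m$. The same obstruction survives a scale-adapted grid of spacing $\asymp 2^{-k/2}\sqrt\Delta$, which still produces $\asymp 1/\sqrt\Delta$ points. The oscillation control suffers the same defect: the maximum over $\asymp 1/\Delta$ sub-intervals of a $\mathrm{Binomial}(m,\Delta)$ count with mean $\Delta m \asymp \log\log m$ is of order $\log m/\log\log m$, which exceeds $\Delta m$ by a diverging factor, so the oscillation of $F_m$ between consecutive grid points is not $O(\Delta)$ uniformly. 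As written, your argument proves the statement only under the strictly stronger hypothesis $\Delta \gtrsim \log m/m$, which is the classical DKW-type threshold you were trying to beat.

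The paper's fix is to union-bound over the $\asymp \log(1/\Delta) \lesssim \log m$ dyadic \emph{scales} only, never over points within a scale. For that, one needs the tail of the supremum over $t\in(t_{\ell+1},t_\ell]$ to be as good as a single Bernstein bound; this is exactly what Talagrand's concentration inequality for empirical processes provides, once the expected supremum $\mathcal{E}_\ell$ over each scale is shown to have the right order $\sqrt{2^{-\ell}/m}$. That expectation bound is obtained from Gin\'e--Zinn symmetrization plus L\'evy's maximal inequality, using the observation that after sorting the sample, the class $\{1_{(-\infty,t]}:t\in(t_{\ell+1},t_\ell]\}$ restricted to the data lives on an increasing chain of coordinate vectors. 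With $\log(1/\Delta)$ events instead of $1/\Delta$, the union bound needs only $\log\log(1/\Delta) \lesssim \Delta m$, i.e.\ $\Delta m \gtrsim \log\log m$---that is where the threshold actually comes from. Your grid-plus-oscillation peeling plays the role of the paper's expectation estimate $\mathcal{E}_\ell$, but at the cost of one event per grid point rather than one per scale; that extra factor is exactly the difference between a $\log m$ and a $\log\log m$ threshold, and bridging it requires a concentration-for-the-supremum tool, not a pointwise one.
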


\begin{remark}
The focus of Theorem \ref{thm:ratio} is on the range $\sigma^2(t)\geq\Delta$ in which  the behaviour of $|F_m(t)-F(t)|$ is governed by cancellations. 
For smaller values of $\sigma^2(t)$ (relative to $\Delta$) no cancellations are to be expected, and if $F(t)\ll \Delta$ then
\[ |F_m(t)-F(t)| \sim \max\left\{F_m(t),F(t)\right\}.\]
The `no cancellations' version of Theorem \ref{thm:ratio} is studied in Section \ref{sec:no.cancellation}.
\end{remark}

At a first glance, the restriction $\Delta\gtrsim \frac{\log\log m}{m}$ in Theorem \ref{thm:ratio} might appear unnatural, especially as it prevents obtaining a uniform error of $\sigma(t)/\sqrt m$.
Note that for a \emph{fixed} $t$ that satisfies $\sigma^2(t)\geq\Delta$, it follows  from Bernstein's inequality that 
\[\P\left(|F_m(t)-F(t)|\geq \sigma(t)\sqrt\Delta \right)\leq 2\exp(-c\Delta m),\] 
which is nontrivial when  $\Delta\gtrsim \frac{1}{m}$. 
On the other hand,  Theorem \ref{thm:ratio} holds \emph{uniformly} in $t$, and then the condition that  $\Delta\gtrsim \frac{\log\log m}{m}$ is necessary:

\begin{theorem}
\label{thm:loglog.optimal}
	Assume that there is a positive sequence $(\Delta_m)_{m=1}^\infty$ and numbers $\alpha,\beta>0$ for which the following holds:
	For every $\Delta\geq \Delta_m$, with probability at least $1-2\exp(-\alpha \Delta m)$, for every $t\in\R$ that satisfies $\sigma^2(t)\geq\Delta$,
	\[ |F_m(t)-F(t)|\leq \beta \sigma(t)\sqrt\Delta.\]
	Then there are constants $c_0$ and $c_1$ that depend only on $\alpha$ and $\beta$ which satisfy that for any  $m\geq c_0$,
	\[\Delta_m \geq c_1\frac{\log\log m}{m}.\]
\end{theorem}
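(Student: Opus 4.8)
The plan is to show that the hypothesis forces $\Delta_m>\Delta^\ast$ for $\Delta^\ast:=c_1\frac{\log\log m}{m}$ (with $c_1=c_1(\alpha,\beta)>0$), by proving that the assumed inequality actually \emph{fails} at $\Delta=\Delta^\ast$ itself; since the hypothesis requires validity for every $\Delta\ge\Delta_m$, this gives $\Delta_m>\Delta^\ast$, which is the claim. As the hypothesis must hold for an arbitrary underlying distribution, it suffices to take $X$ uniform on $[0,1]$, so that $F(t)=t$ and $\sigma^2(t)=t(1-t)$ on $[0,1]$. Put $N(t):=mF_m(t)=\#\{i\le m:X_i\le t\}\sim\mathrm{Bin}(m,t)$ and $\lambda:=\beta\sqrt{\Delta^\ast m}$, so that $\lambda$ has order $\sqrt{\log\log m}$. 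For $t\le 1/2$ one has $\beta\,\sigma(t)\sqrt{\Delta^\ast}=\lambda\sqrt{t(1-t)/m}$, so the one-sided event
\[
A_t:=\bigl\{\,N(t)-mt>\lambda\sqrt{mt(1-t)}\,\bigr\}=\bigl\{\,F_m(t)-F(t)>\beta\,\sigma(t)\sqrt{\Delta^\ast}\,\bigr\}
\]
is contained in the failure event at $t$ provided also $\sigma^2(t)\ge\Delta^\ast$. Thus it is enough to find a finite set of points $t$, each with $\sigma^2(t)\ge\Delta^\ast$, for which $\P\bigl(\bigcup_t A_t\bigr)>2\exp(-\alpha\Delta^\ast m)$.

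I would take the geometric grid $t_j=\rho^{-j}$ inside the window $t_j\in[K\beta^2\Delta^\ast,\ 1/2]$, where $K=K(\alpha,\beta)$ and $\rho=\rho(\alpha,\beta)$ are large constants. On this window $\sigma^2(t_j)\ge t_j/2\ge\Delta^\ast$; the mean $mt_j$ is at least $K\lambda^2$, so each $A_{t_j}$ asks $N(t_j)$ for a deviation of \emph{relative} size at most $O(1/\sqrt K)$; and the number $J$ of grid points has order $\log(1/\Delta^\ast)/\log\rho\asymp\log m$. (Discarding the $O(1)$ smallest scales is what keeps the relative deviations small uniformly in $\beta$, hence all binomial tail constants close to the Gaussian ones.) Estimate by Bonferroni,
\[
\P\Bigl(\bigcup_{j\le J} A_{t_j}\Bigr)\ \ge\ \sum_{j\le J}\P(A_{t_j})\ -\ \sum_{i<j}\P\bigl(A_{t_i}\cap A_{t_j}\bigr).
\]
For the diagonal, $A_{t_j}$ is the event that $N(t_j)$ exceeds its mean by exactly $\lambda$ standard deviations, and a sharp local-CLT / Cramér bound for the binomial gives $\P(A_{t_j})\ge c\,\lambda^{-1}\exp\bigl(-\tfrac12\lambda^2(1+O(1/\sqrt K))\bigr)$ uniformly in $j$, so $\sum_j\P(A_{t_j})\gtrsim\frac{J}{\lambda}\exp\bigl(-\tfrac12\lambda^2(1+O(1/\sqrt K))\bigr)$. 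For the off-diagonal I would use that $(N(t_1),N(t_2),\dots)$ is a Markov chain with binomial thinning transitions, $N(t_j)\mid N(t_i)\sim\mathrm{Bin}(N(t_i),\rho^{-(j-i)})$ for $i<j$; conditionally on $A_{t_i}$ the chain reaches $t_j$ with a surplus over $mt_j$ equal to only $\rho^{-(j-i)/2}$ times the surplus $A_{t_j}$ demands (the overshoot of $N(t_i)$ above its own threshold being negligible on this scale), so $A_{t_j}$ still requires a further, conditionally independent, upward deviation of order $\lambda\bigl(1-\rho^{-(j-i)/2}\bigr)$ standard deviations. This yields
\[
\P\bigl(A_{t_i}\cap A_{t_j}\bigr)\ \le\ \P(A_{t_i})\cdot C\exp\!\Bigl(-\tfrac12\lambda^2\bigl(1-\rho^{-(j-i)/2}\bigr)^2(1-O(1/\sqrt K))\Bigr),
\]
and summing the geometric-type series over $j-i\ge1$ bounds $\sum_{i<j}\P(A_{t_i}\cap A_{t_j})$ by $\bigl(\sum_i\P(A_{t_i})\bigr)\cdot J\exp(-\tfrac12\lambda^2\theta)$ with $\theta=\theta(\rho,K)\to1$ as $\rho,K\to\infty$.

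Taking $K,\rho$ large enough (in terms of $\alpha,\beta$) makes $\theta$ close enough to $1$ that this correction is $\le\tfrac12\sum_i\P(A_{t_i})$ as soon as $c_1>\frac{2}{\beta^2\theta}$ (which is $>\frac2{\beta^2}$); then $\P(\bigcup_j A_{t_j})\ge\tfrac12\sum_j\P(A_{t_j})\gtrsim\frac{J}{\lambda}(\log m)^{-\beta^2c_1(1+o(1))/2}$. Comparing with $2\exp(-\alpha\Delta^\ast m)=2(\log m)^{-\alpha c_1}$ and using $J\asymp\log m$, $\lambda\asymp\sqrt{\log\log m}$, it suffices that $1-\tfrac{\beta^2c_1}{2}(1+o(1))+\alpha c_1>0$, i.e. — when $\beta^2>2\alpha$ — that $c_1<\frac{2}{\beta^2-2\alpha}$. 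Since $\frac{2}{\beta^2}<\frac{2}{\beta^2-2\alpha}$, the requirements $\frac{2}{\beta^2\theta}<c_1<\frac{2}{\beta^2-2\alpha}$ leave a nonempty, $m$-independent interval for $\theta$ close enough to $1$, and I would take $c_1$ in it (e.g. $c_1=\frac1{\beta^2}+\frac1{\beta^2-2\alpha}$ once $K,\rho$ are large enough). When $\beta^2\le2\alpha$ the argument is even easier: then $\exp(-\alpha\Delta^\ast m)\le\exp(-\tfrac12\lambda^2)$ is already dominated by a single term of $\sum_j\P(A_{t_j})$, so any $c_1>\frac{2}{\beta^2\theta}$ works. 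In either case the hypothesis at $\Delta=\Delta^\ast$ is contradicted unless already $\Delta_m>\Delta^\ast$, i.e. $\Delta_m\ge c_1\frac{\log\log m}{m}$ for $m\ge c_0(\alpha,\beta)$.

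The delicate point — and the reason the threshold is $\frac{\log\log m}{m}$ and not $\frac1m$ — is the off-diagonal step together with the ensuing optimization. A single scale only produces a bound of type $e^{-c\Delta m}$ (already non-trivial for $\Delta\gtrsim1/m$); the improvement to $(\log m)\,e^{-c\Delta m}$ comes from taking a union over $\asymp\log m$ scales, but this requires those scales to behave essentially independently, so that $\P(\bigcup_j A_{t_j})$ is of the order of $\sum_j\P(A_{t_j})$ rather than merely of $\max_j\P(A_{t_j})$. Enforcing near-independence forces $\rho$ up, which shrinks the number of usable scales, and it is exactly this trade-off — tracked above via the parameter $\theta$ and the relative-deviation bound $O(1/\sqrt K)$ — that must be carried out carefully, in particular uniformly over $\alpha$ and $\beta$.
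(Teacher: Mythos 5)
Your strategy is a genuine alternative to the paper's: you estimate the failure probability directly for a uniform random variable via a second-moment (Bonferroni) argument over a geometric grid of $\asymp\log m$ scales, exploiting the Markov/binomial-thinning structure of the counts $N(t_j)$. The paper instead first converts the high-probability statement to an $L^1$ bound by tail integration (Corollary \ref{cor:tail.integration}), symmetrizes via Gin\'e--Zinn, passes from $F$ to $F_m$ in the denominator through the ``isomorphic'' Lemma \ref{thm:isomorphic}, and then reduces the whole matter to a lower bound on $\E\max_{\ell\le r}\bigl|\ell^{-1/2}\sum_{i\le\ell}\eps_i\bigr|$ (Theorem \ref{thm:main-lower}), which is proved by decomposing the Rademacher walk into \emph{genuinely independent} increments along a geometric subsequence. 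Both arguments rest on the same heuristic --- $\asymp\log m$ nearly-independent scales, each with a Gaussian-type tail $\asymp e^{-\lambda^2/2}$ at the level $\lambda\asymp\sqrt{\log\log m}$ --- but the paper buys exact independence (and hence avoids any conditioning/overshoot bookkeeping) at the cost of the extra reductions; you avoid the reductions but must control the correlations by hand. The off-diagonal bound you propose is in fact weaker than the true bivariate-Gaussian asymptotic $\exp\!\bigl(-\lambda^2(1-\rho^{-d/2})/(1+\rho^{-d/2})\bigr)$, hence valid, and the joint-density calculation confirms that the overshoot of $N(t_i)$ above its threshold contributes only polynomial factors, as you assert.

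One quantitative slip needs fixing. The Cram\'er-type lower bound for $\mathrm{Bin}(n,p)$ with $p$ small has the correction $\exp\bigl(c\,\gamma_3\lambda^3/\sqrt n\bigr)$ with $\gamma_3\sim 1/\sqrt p$, so the controlling quantity is $\lambda^3/\sqrt{np}$, not the relative deviation $\lambda/\sqrt{np}$. With your window cut-off $t_j\ge K\beta^2\Delta^\ast$, i.e.\ $np=mt_j\gtrsim K\lambda^2$, one gets $\lambda^3/\sqrt{np}\gtrsim\lambda^2/\sqrt K\to\infty$, so the Cram\'er correction is not $1+O(1/\sqrt K)$ as you state but a power of $\log m$, and the validity range $\lambda\lesssim(np)^{1/6}$ of Lemma \ref{lemma:lower-comparison} is violated at the small scales. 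The fix is to raise the lower cut-off to $np\gtrsim\lambda^6$, i.e.\ $t_j\gtrsim(\log\log m)^3/m$: this discards $O(\log\log\log m)$ scales (not $O(1)$), still leaves $\asymp\log m$ usable scales, and then the Cram\'er correction is a harmless $\exp(O(1/\sqrt K))$. With that adjustment --- and with the overshoot integral written out explicitly --- your computation of the admissible interval for $c_1$ goes through and the argument is sound.
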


Moreover, not only is the restriction on $\Delta$  in  Theorem \ref{thm:ratio} necessary, the probability estimate is optimal as well and cannot be improved even for a fixed $t\in\R$.

\begin{proposition} \label{lem:lower.bound.probab}
	There are absolute constants $c_1$ and $c_2$ such that the following holds.
	For every $\Delta\geq \frac{c_1}{m}$ and  $t\in\R$ that satisfies $\sigma^2(t)\geq\Delta$, we have that with probability at least $2\exp(-c_1\Delta m)$,
	\[ \left|  F_m(t) - F(t)  \right|
	\geq c_2 \sigma(t) \sqrt{\Delta} .\]
\end{proposition}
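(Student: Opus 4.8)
The statement concerns a \emph{fixed} $t$, so it reduces to a lower bound on a binomial tail. Write $p:=F(t)$ and, for $q\in[0,1]$, let $\P_q$ be the law under which $S_m:=\sum_{i=1}^m 1_{(-\infty,t]}(X_i)\sim\mathrm{Bin}(m,q)$, so that $mF_m(t)=S_m$, $\sigma^2=p(1-p)$, $\V(S_m)=m\sigma^2$, and the probability in the statement is $\P_p$. Replacing $S_m$ by $m-S_m$ if necessary (which swaps $p\leftrightarrow 1-p$ and changes neither $|S_m-mp|$ nor $\sigma$) we may assume $p\le 1/2$, so that $\sigma^2\in[p/2,p]$ and, by hypothesis, $\Delta\le\sigma^2\le p$. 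Setting $\lambda:=c_2\sigma\sqrt\Delta\,m$, the goal is $\P_p(|S_m-mp|\ge\lambda)\ge 2\exp(-c_1\Delta m)$, and it suffices to lower bound the one-sided probability $\P_p(S_m\ge mp+\lambda)$. The key observation is that, because $\Delta\le\sigma^2$,
\[ \lambda=c_2\sigma\sqrt\Delta\,m\ \le\ c_2\sigma^2 m\ =\ c_2\,\V(S_m),\]
so the deviation we are after lies in the \emph{moderate-deviation} window, where a binomial tail is comparable to a Gaussian one with exponent of order $\lambda^2/\V(S_m)=c_2^2\Delta m$ --- precisely the order we want.

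The plan is to make this rigorous via an exponential change of measure. Put $p_\theta:=p+\lambda/m=p+c_2\sigma\sqrt\Delta$ and $\theta:=\log\frac{p_\theta(1-p)}{p(1-p_\theta)}>0$, so that $\E_{p_\theta}S_m=mp+\lambda$. A routine second-moment (or median) argument shows that for a suitable absolute constant $C$ the window $W:=\bigl[\lceil mp_\theta\rceil,\ mp_\theta+C\sigma\sqrt m\bigr]$ satisfies $\P_{p_\theta}(S_m\in W)\ge c_3$ for an absolute constant $c_3>0$ (and, for $c_1$ large, $W\subseteq\{0,\dots,m\}$); note $W\subseteq\{S_m\ge mp+\lambda\}$. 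Reverting to $\P_p$, for every integer $j\in W$ the likelihood ratio equals $\P_p(S_m=j)/\P_{p_\theta}(S_m=j)=(1-p+pe^\theta)^m e^{-\theta j}$, which decreases in $j$ and is therefore at least $(1-p+pe^\theta)^m e^{-\theta(mp_\theta+C\sigma\sqrt m)}$; since $(1-p+pe^\theta)^m e^{-\theta m p_\theta}=\exp\!\bigl(-m\,\mathrm{KL}(p_\theta\,\|\,p)\bigr)$, where $\mathrm{KL}(q\,\|\,p):=q\log\tfrac qp+(1-q)\log\tfrac{1-q}{1-p}$, summing over $W$ gives
\[ \P_p(S_m\ge mp+\lambda)\ \ge\ \P_{p}(S_m\in W)\ \ge\ c_3\exp\!\bigl(-m\,\mathrm{KL}(p_\theta\,\|\,p)-C\theta\sigma\sqrt m\bigr).\]

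It remains to check that this exponent is $O(c_2\Delta m)$, and this is exactly where the hypothesis $\sigma^2\ge\Delta$ enters. For $c_2$ a small enough absolute constant the perturbation $|p_\theta-p|=c_2\sigma\sqrt\Delta\le c_2\,p(1-p)$ is at most a small fraction of $\min\{p,1-p\}$, so the second-order Taylor expansion of $q\mapsto\mathrm{KL}(q\,\|\,p)$ about $q=p$ (vanishing first derivative, second derivative $1/(q(1-q))$) is legitimate and yields $\mathrm{KL}(p_\theta\,\|\,p)\le C(c_2\sigma\sqrt\Delta)^2/\sigma^2=Cc_2^2\Delta$; similarly $\theta\le C|p_\theta-p|/\sigma^2=Cc_2\sqrt\Delta/\sigma$, so $\theta\sigma\sqrt m\le Cc_2\sqrt{\Delta m}\le Cc_2\Delta m$ (using $\Delta m\ge c_1\ge1$). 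Hence $\P_p(|S_m-mp|\ge\lambda)\ge c_3\exp(-C'c_2\Delta m)$, and choosing first $c_2$ small and then $c_1$ large (all absolute; using $\Delta m\ge c_1$ once more to absorb the factors $c_3$ and $2$, which also ensures $2\exp(-c_1\Delta m)\le 1$) completes the proof. I expect the main obstacle to be making the moderate-deviation estimate hold with the correct exponent \emph{uniformly} over all admissible $p$: one must guarantee that the Gaussian approximation of the rate function is valid --- which is precisely what $\sigma^2(t)\ge\Delta$ together with the smallness of $c_2$ delivers --- and one must avoid the $\Theta(1/\sqrt m)$ loss that estimating a single binomial atom would incur, which is why the argument is run on a window of width $\asymp\sigma\sqrt m$. (Alternatively, one could estimate $\sum_{j\in W}\P_p(S_m=j)$ directly by Stirling's formula.)
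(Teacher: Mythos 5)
Your proof is correct, but it takes a genuinely different route from the paper. You reduce to a binomial lower-tail bound and establish it by exponential tilting: shift from $\P_p$ to $\P_{p_\theta}$ with $p_\theta=p+c_2\sigma\sqrt{\Delta}$, localize to a window of width $\asymp\sigma\sqrt m$ around the tilted mean, control the likelihood ratio on that window, and Taylor-expand the KL rate using $\sigma^2\ge\Delta$ to see that the exponent is $O(c_2\Delta m)$. This is the classical Cram\'er-type moderate-deviation lower bound, and every step you leave as ``routine'' --- the median/Chebyshev bound $\P_{p_\theta}(S_m\in W)\gtrsim 1$, the uniform bound on the maximal binomial atom when $m\sigma^2\ge\Delta m\ge c_1$, the second-order expansion of $q\mapsto\mathrm{KL}(q\,\|\,p)$ valid because the perturbation is $\le c_2\sigma^2\ll\min\{p,1-p\}$ --- does go through. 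The paper instead symmetrizes (passing to $W_i=1_{(-\infty,t]}(X_i)-1_{(-\infty,t]}(X_i')$, conditioning on the event that $\gtrsim F(t)m$ of the $W_i$ are nonzero) and then invokes Montgomery-Smith's lower bound on the tail of a Rademacher sum (Lemma~\ref{lemma:MS}). The paper itself remarks that the statement ``can be established using well-known lower bounds on the binomial distribution,'' so your argument is essentially the alternative the authors gesture at. The trade-off: your approach is elementary and self-contained but requires verifying the Gaussian-regime local estimates uniformly in $p$; the paper's is shorter because Montgomery-Smith can be cited as a black box, and the symmetrization step cleanly isolates the Rademacher randomness, a structure that reappears elsewhere in the paper (e.g.\ in the proof of Theorem~\ref{thm:ratio}).
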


\vspace{0.5em}
\noindent
We end the introduction with a word about \emph{notation.}
Throughout, $c,c_0,c_1,\dots$ denote strictly positive absolute constants that may change their value in each appearance.
We write $A\lesssim B$ if $A\leq c B$ for an absolute constant and $A\sim B$ if $c_1 A\leq B\leq c_2 A$ for absolute constants $c_1$ and $c_2$.
If a constant $c$ depends on a parameter $\alpha$, that is denoted by $c=c(\alpha)$.

We denote by $\varepsilon=(\varepsilon_i)_{i=1}^m$ iid Bernoulli random variables (that is, symmetric random variables that satisfy $\P(\varepsilon_1=1)=\frac{1}{2}$) that are independent of $(X_i)_{i=1}^m$.
The expectation only with respect to $(X_i)_{i=1}^m$ (resp.\ $\varepsilon$) is denoted by $\E_X$ (resp.\ $\E_\varepsilon$), and the same convention applies to $\P_X$ and $\P_\varepsilon$.

\section{Proof of Theorem \ref{thm:ratio}}

To simplify the presentation, we assume in what follows that $X$ is absolutely continuous. 
The modifications needed in the general case are straightforward. 
In addition, we may assume without loss of generality that $\Delta \leq \frac{1}{4}$ (because $\sup_{t\in\R}\sigma^2(t)\leq \frac{1}{4})$) and we  consider  only  $t\in\R$ that satisfy $F(t)\leq \frac{1}{2}$; in particular,
$$
\frac{1}{2} F(t)\leq \sigma^2(t)\leq  F(t).
$$
The case where $F(t)>\frac{1}{2}$ follows from the same arguments and is omitted.

The proof of Theorem \ref{thm:ratio} presented here is not the most basic possible, but it is the shortest that we are aware of.
It is based on Talagrand's concentration inequality for empirical process \cite{talagrand1994sharper}.
The version used here can be found, for example, in \cite[Theorem 12.5]{boucheron2013concentration}.

\begin{theorem} 
\label{thm:tal-concentration-emp}
There exists an absolute constant $c$ such that the following holds.
Let $\mathcal{H}$ be a class of functions that are bounded by $1$ and set $\sigma_{\mathcal{H}}=\sup_{h \in \mathcal{H} } \|h\|_{L_2}$.
Then for $\lambda >0$, with probability at least $1-2\exp(-\lambda)$,
\[
\sup_{h \in \mathcal{H} }\left|\frac{1}{m}\sum_{i=1}^m h(X_i) - \E h(X) \right|
\leq c\left( \E \sup_{h \in \mathcal{H}}\left|\frac{1}{m}\sum_{i=1}^m h(X_i) - \E h(X) \right| + \sigma_{\mathcal{H}}\sqrt{\frac{\lambda}{m}} + \frac{\lambda}{m}\right).\]
\end{theorem}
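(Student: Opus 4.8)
\medskip
\noindent\emph{Proof proposal.} Theorem \ref{thm:tal-concentration-emp} is the Talagrand--Bousquet concentration inequality in the functional form recorded in \cite{boucheron2013concentration}, and I would prove it by the entropy method (Ledoux--Massart--Rio--Bousquet), quoting the standard scalar lemmas. \emph{Reductions.} Assume $\mathcal H$ is countable so that the relevant suprema are measurable, and replace $\mathcal H$ by $\mathcal H\cup(-\mathcal H)$, which is still bounded by $1$ and has the same parameter $\sigma_{\mathcal H}$; for this enlarged class $\sup_h\sum_i(h(X_i)-\E h)=\sup_h|\sum_i(h(X_i)-\E h)|$, so neither side of the asserted inequality changes. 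It therefore suffices to establish a Bennett-type deviation bound for $Z=\sup_{h\in\mathcal H}\sum_{i=1}^m\bigl(h(X_i)-\E h(X)\bigr)$ above its mean, and then divide by $m$.

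\emph{Entropy method and increment control.} For $\lambda>0$ put $G(\lambda)=\log\E\,e^{\lambda(Z-\E Z)}$. Let $Z_i=\inf_{x}g(X_1,\dots,x,\dots,X_m)$ be the $i$-th modified variable, where $Z=g(X_1,\dots,X_m)$. Sub-additivity of entropy together with tensorization yields Massart's modified logarithmic Sobolev inequality
\[
\mathrm{Ent}\bigl(e^{\lambda Z}\bigr)\ \le\ \sum_{i=1}^m\E\Bigl[e^{\lambda Z}\,\psi\bigl(-\lambda (Z-Z_i)\bigr)\Bigr],\qquad \psi(u)=e^{u}-u-1 ,
\]
which rewrites as the differential inequality $\lambda G'(\lambda)-G(\lambda)\le e^{-\lambda\E Z}\sum_i\E[e^{\lambda Z}\psi(-\lambda(Z-Z_i))]/\E e^{\lambda Z}$. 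Now let $h^\ast$ be the (random) maximizer attaining $Z$. Dropping the $i$-th summand from any competitor cannot decrease the infimized variable, so $0\le Z-Z_i\le h^\ast(X_i)-\inf_x h^\ast(x)\le 2$, while $\sum_i(Z-Z_i)$ is, up to lower-order corrections, at most $Z$, and $\sum_i(Z-Z_i)^2\le\sum_i\bigl(h^\ast(X_i)-\inf_x h^\ast(x)\bigr)^2$, whose conditional expectation given $h^\ast$ (comparing $h^\ast(X_i)$ with an independent copy) is at most a constant multiple of $m\sigma_{\mathcal H}^2+\E Z$. Inserting these estimates and $\psi(-u)\le u^2/2$ for $u\ge0$ into the modified log-Sobolev inequality, and integrating the resulting differential inequality (Herbst's argument), produces the closed-form Bennett bound
\[
G(\lambda)\ \le\ \frac{v}{b^2}\,\psi(b\lambda),\qquad b\sim 1,\quad v\ \sim\ m\sigma_{\mathcal H}^2+\E Z .
\]

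\emph{From the Bennett bound to the stated inequality.} Markov's inequality for $e^{\lambda Z}$, combined with the bound on $G$ and optimization over $\lambda$, gives for every $\theta>0$ that $Z\le \E Z+\sqrt{2v\theta}+\tfrac{b}{3}\theta$ with probability at least $1-e^{-\theta}$. Dividing by $m$ and using $Z/m=\sup_h|\tfrac1m\sum_i h(X_i)-\E h|$, $\E Z/m=\E\sup_h|\tfrac1m\sum_i h(X_i)-\E h|$ together with $v\lesssim m\sigma_{\mathcal H}^2+\E Z$ and the elementary splittings $\sqrt{(m\sigma_{\mathcal H}^2+\E Z)\theta}\le\sqrt{m\sigma_{\mathcal H}^2\theta}+\sqrt{\E Z\cdot\theta}$ and $\sqrt{(\E Z/m)(\theta/m)}\le \tfrac12(\E Z/m)+\tfrac12(\theta/m)$, every term on the right becomes a constant multiple of $\E\sup_h|\tfrac1m\sum_i h(X_i)-\E h|$, of $\sigma_{\mathcal H}\sqrt{\theta/m}$, or of $\theta/m$. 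Renaming $\theta$ as $\lambda$ and returning from $\mathcal H\cup(-\mathcal H)$ to $\mathcal H$ yields exactly the claimed estimate for a suitable absolute constant $c$.

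\emph{Main obstacle.} The one genuinely delicate point is the increment bound in the second step: the variance proxy that must emerge is $m\sigma_{\mathcal H}^2+\E Z$ — the ``weak variance'' of the supremum plus its mean — rather than the crude bound $m$, which would only reproduce a DKW-strength $\sqrt{\lambda/m}$ tail with no $\sigma_{\mathcal H}$ improvement. Extracting precisely this quantity requires careful handling of the modified variables $Z_i$ and of the maximizer $h^\ast$: the comparison $\sum_i\E'\bigl[(h^\ast(X_i)-h^\ast(X_i'))^2\bigr]\le 2m\sigma_{\mathcal H}^2$ must be carried out conditionally on $h^\ast$, the gap between $\inf_x h^\ast(x)$ and $\E h^\ast$ must be absorbed, and the linear-in-$Z$ contribution must be tracked through the Herbst integration. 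The remaining ingredients — sub-additivity of entropy via the variational formula and Han's inequality, the tensorization step, and the scalar inequalities for $\psi$ — are standard and can be quoted or reproduced in a few lines.
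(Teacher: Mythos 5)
The paper never proves Theorem \ref{thm:tal-concentration-emp}: it is used as a black box, quoted from Talagrand \cite{talagrand1994sharper} in the form given in \cite[Theorem 12.5]{boucheron2013concentration}. Your proposal is therefore not an alternative route but an outline of the standard proof from the cited source --- the entropy method of Ledoux--Massart--Rio--Bousquet --- and as such the approach is the right one: the reduction to $\mathcal{H}\cup(-\mathcal{H})$, the modified log-Sobolev/Herbst scheme, the Bennett-type bound with variance proxy of order $m\sigma_{\mathcal H}^2+\E Z$, and the final splitting $\sqrt{\E Z\cdot\theta}\le\tfrac12\E Z+\tfrac12\theta$ all correctly assemble into the stated inequality, which only requires an unspecified absolute constant $c$. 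Be aware, though, that what you present is a sketch rather than a proof: the step you yourself flag as the main obstacle is the entire content of Bousquet's and Massart's work. In particular, with your inf-type modified variables $Z_i$ the bound $\sum_i(Z-Z_i)\lesssim Z$ is not automatic (the discrepancy between $\inf_x h^\ast(x)$ and $\E h^\ast$ contributes a term that must genuinely be absorbed), and ``the conditional expectation given $h^\ast$'' of $\sum_i(Z-Z_i)^2$ is delicate because $h^\ast$ depends on the whole sample; making the variance proxy $m\sigma_{\mathcal H}^2+\E Z$ (rather than $m$) come out of the Herbst integration, with the linear-in-$Z$ term tracked correctly, is exactly the argument of \cite[Chapter 12]{boucheron2013concentration}. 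Since those details are standard and the constants here are not optimized, quoting them (as the paper does) or reproducing the Massart-style version would both be acceptable; your write-up correctly identifies where the work lies but does not carry it out.
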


Since $\Delta\leq\frac{1}{4}$, we have that  $\log_2\frac{1}{\Delta} \geq 2$. 
For every $1\leq \ell\leq \log_2\frac{1}{\Delta}$, let  $t_\ell\in\R$ satisfy $F(t_\ell)=2^{-\ell} $ and set $\mathcal{H}_\ell= \{ 1_{(-\infty,t]} : t \in(t_{\ell+1},  t_\ell]\}$. 
In particular, for every $h\in \mathcal{H}_\ell$, $2^{-(\ell+1)}\leq \E h^2(X) \leq 2^{-\ell}$.

\vspace{0.5em}

With Theorem \ref{thm:tal-concentration-emp} in mind, let us begin by showing that for every $1\leq \ell \leq \log_2\frac{1}{\Delta}$,
\begin{equation} 
\label{eq:expectation-F-j}
\mathcal{E}_\ell
=\E \sup_{h \in \mathcal{H}_\ell} \left|\frac{1}{m}\sum_{i=1}^m h(X_i) -\E h(X) \right|
\leq 4 \sqrt{\frac{2^{-\ell}}{m}}.
\end{equation}
Set
\[ \mathcal{V}_\ell= \left\{ (h(X_i))_{i=1}^m : h \in \mathcal{H}_\ell \right\}
\]
and let
\[
\sharp_\ell
= \sup_{h \in \mathcal{H}_\ell} \left| \{i : h(X_i) = 1\} \right|
= \sup_{h \in \mathcal{H}_\ell} \sum_{i=1}^m h(X_i).
\]
Denote by $(e_i)_{i=1}^m$ the standard basis in $\R^m$.
It is straightforward to verify that for every realization of $(X_i)_{i=1}^m$ there is a permutation $\pi$ of $\{1,\dots,m\}$ for which
\begin{equation} \label{eq:structure-1}
\mathcal{V}_\ell
\subset \left\{ \sum_{i=1}^j e_{\pi(i)} : j \leq \sharp_\ell \right\} \cup \{0\}.
\end{equation}
By the Gin\'{e}-Zinn symmetrization inequality \cite{gine1984some} (see also \cite[Lemma 11.4]{boucheron2013concentration}) and \eqref{eq:structure-1},
\begin{align*}
\mathcal{E}_\ell
&\leq  \frac{2}{m}\E_X \E_\eps \sup_{v \in \mathcal{V}_\ell} \left|\sum_{i=1}^m \eps_i v_i \right|
\leq  \frac{2}{m} \E_X \E_\eps \max_{1 \leq j \leq \sharp_\ell} \left|\sum_{i=1}^j \eps_i \right|,
\end{align*}
where  we set $\max_\emptyset=0$.
Invoking L\'{e}vy's maximal inequality (see, e.g., \cite[Proposition 2.3]{ledoux1991probability}),
\[
\E_\eps \max_{1 \leq j \leq \sharp_\ell} \left|\sum_{i=1}^j \eps_i \right|
\leq 2 \E_X \left|\sum_{i=1}^{\sharp_\ell} \eps_i\right|
\leq 2 \sqrt{\sharp_\ell}.
\]
Therefore,
\[
\mathcal{E}_\ell
\leq \frac{4}{m}\E_X \sqrt{\sharp_\ell}
\leq \frac{4}{m}\sqrt{\E \sharp_\ell},
\]
and
\[
\E \sharp_\ell = \E \sup_{ t \leq t_\ell} \sum_{i=1}^m 1_{(-\infty,t]}(X_i)
= \E \sum_{i=1}^m 1_{(-\infty,t_\ell]}(X_i)
\leq m 2^{-\ell},
\]
proving \eqref{eq:expectation-F-j}.

\vspace{0.5em}
Next, let $1\leq \ell \leq \log_2\frac{1}{\Delta}$.
By Talagrand's concentration inequality for $\mathcal{H}_\ell$ and $\lambda =2\Delta m $, it is evident that with probability at least $1-2\exp(-2\Delta m)$,
\begin{align*}
\sup_{h \in  \mathcal{H}_\ell} \left|\frac{1}{m}\sum_{i=1}^m h(X_i) -\E h(X) \right|
&\leq c_1 \left( \sqrt \frac{2^{-\ell}}{ m} + \sqrt{ 2^{-\ell} \Delta} + 2 \Delta\right)  \\
&\leq c_2 \sqrt{ 2^{-\ell} } \sqrt{\Delta},
\end{align*}
where the last inequality holds because $2^{-\ell}\geq \Delta$. 
Moreover, if $h\in \mathcal{H}_\ell$ then \linebreak $\E h^2 (X) \geq  2^{-(\ell+1)}$, and therefore, with probability at least $1-2\exp(-2\Delta m)$,  for every $t\in(t_{\ell+1},t_\ell]$
\begin{align}
\label{eq:ratio}
\left| F_m(t) - F(t) \right|
\leq c_3  \sqrt{ 2F(t) \Delta}
\leq c_4   \sigma(t) \sqrt{\Delta }.
\end{align}

If $\Delta m \geq c \log\log m$ for a suitable constant $c$, then it follows from the union bound over $1\leq \ell\leq \log_2\frac{1}{\Delta}$ that with probability at least $1-2\exp(-\Delta m)$,  \eqref{eq:ratio} holds for every $t\in\R$ that satisfies $F(t)\geq \Delta$.
In particular, \eqref{eq:ratio} holds when $\sigma^2(t) \geq \Delta$, which  completes the proof of Theorem \ref{thm:ratio}.
\qed

\vspace{0.5em}
We end this section by establishing  two simple observations:
That the restriction $\sigma^2(t)\geq\Delta$ in Theorem \ref{thm:ratio} may be omitted by a simple monotonicity argument; and that a high probability error estimate (similar to the one in Theorem \ref{thm:ratio}) trivially implies a corresponding estimate on the expected error, via tail-integration.

\begin{corollary} \label{cor:ratio}
	There are absolute constants $c_0$ and $c_1$ such that the following holds.
	For every $\Delta \geq c_0 \frac{ \log \log m }{m}$,
	 with probability at least $1-2\exp(-c_1\Delta m)$, for every $t \in \R$,
	\[
	\left| F_m(t) - F(t) \right|
	\leq  \Delta +   \sigma(t)  \sqrt{\Delta}.
	\]
\end{corollary}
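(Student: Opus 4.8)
As in the proof of Theorem~\ref{thm:ratio} we may assume $\Delta\le\tfrac14$. The plan is to derive Corollary~\ref{cor:ratio} from Theorem~\ref{thm:ratio}, using an elementary monotonicity argument to cover the range $\sigma^2(t)<\Delta$ that the theorem does not address. Fix $\Delta$ as in the statement and work throughout on the event $\Omega$ of probability at least $1-2\exp(-c_1\Delta m)$ on which the conclusion of Theorem~\ref{thm:ratio} holds, i.e.\ $|F_m(t)-F(t)|\le\sigma(t)\sqrt\Delta$ for every $t$ with $\sigma^2(t)\ge\Delta$. On $\Omega$ such $t$ already satisfy the bound claimed in the corollary, since $\sigma(t)\sqrt\Delta\le\Delta+\sigma(t)\sqrt\Delta$; so it only remains to handle $t$ with $\sigma^2(t)<\Delta$.

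For such $t$, exactly as in the proof of Theorem~\ref{thm:ratio} it is enough to treat the case $F(t)\le\tfrac12$, the case $F(t)>\tfrac12$ following by the symmetric argument (replace $F_m,F$ by $1-F_m,1-F$). Let $a\in[0,\tfrac12]$ be the unique root of $a(1-a)=\Delta$; since $\Delta\le\tfrac14$ one checks that $a\in[\Delta,2\Delta]$. Because $u\mapsto u(1-u)$ is increasing on $[0,\tfrac12]$, the assumptions $\sigma^2(t)<\Delta$ and $F(t)\le\tfrac12$ give $F(t)<a$. Choose $t_0\in\R$ with $F(t_0)=a$ (such a point exists since $X$ is absolutely continuous; the general case is a routine modification). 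Then $\sigma^2(t_0)=a(1-a)=\Delta$, so $t_0$ lies in the range covered by Theorem~\ref{thm:ratio}, whence on $\Omega$ one has $F_m(t_0)\le F(t_0)+\sigma(t_0)\sqrt\Delta=a+\Delta\le 3\Delta$. Since $F$ and $F_m$ are non-decreasing and $F(t)<a=F(t_0)$ forces $t\le t_0$, we get $F_m(t)\le F_m(t_0)\le 3\Delta$, while $F(t)<a\le 2\Delta$; hence $F_m(t),F(t)\in[0,3\Delta]$ and therefore $|F_m(t)-F(t)|\le 3\Delta$.

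Combining the two cases, on $\Omega$ we have $|F_m(t)-F(t)|\le 3\Delta+\sigma(t)\sqrt\Delta$ for every $t\in\R$. To turn the factor $3$ in front of $\Delta$ into $1$, I would simply apply this conclusion with $\Delta$ replaced by $\Delta/9$, which is legitimate provided $\Delta\ge 9c_0\tfrac{\log\log m}{m}$: it yields, with probability at least $1-2\exp(-c_1\Delta m/9)$, the bound $|F_m(t)-F(t)|\le\tfrac13\Delta+\tfrac13\sigma(t)\sqrt\Delta\le\Delta+\sigma(t)\sqrt\Delta$ for all $t\in\R$, and renaming the absolute constants $c_0$ and $c_1$ finishes the proof.

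The argument is elementary and I do not foresee a genuine obstacle. The only two points that need a little attention are that $t_0$ must be chosen inside the region $\{\sigma^2\ge\Delta\}$ handled by Theorem~\ref{thm:ratio}, which is why $a$ is taken to be the root of $a(1-a)=\Delta$ rather than $a=\Delta$; and that the monotonicity step on its own produces only an additive error of \emph{order} $\Delta$, so the final rescaling is what converts it into the clean additive term $\Delta$ stated in the corollary.
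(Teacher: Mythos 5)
Your proposal is correct and takes essentially the same route as the paper: reduce to the region $\sigma^2(t)\ge\Delta$ covered by Theorem~\ref{thm:ratio}, then handle $\sigma^2(t)<\Delta$ by choosing a reference point $t_0$ with $\sigma^2(t_0)=\Delta$ and using the monotonicity of $F$ and $F_m$. The one small difference is that you make explicit the final rescaling of $\Delta$ (the paper's proof stops at a bound of order $5\Delta$ and implicitly absorbs the constant into $c_0,c_1$), and you bound $|F_m(t)-F(t)|$ by $\max\{F_m(t_0),F(t_0)\}$ rather than $F_m(t_0)+F(t_0)$, giving a marginally sharper constant; neither changes the substance of the argument.
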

\begin{proof}
	As before, we will only consider the case $F(t)\leq\frac{1}{2}$.
	By Theorem \ref{thm:ratio}, with probability at least $1-2\exp(-c\Delta m)$,  if $\sigma^2(t)\geq\Delta$ then
	\[|F_m(t)-F(t)|\leq\sigma(t)\sqrt{\Delta}.\]
	Fix a realization of $(X_i)_{i=1}^m$ in that high probability event, and let $t_0\in\R$ satisfy that $\sigma^2(t_0)=\Delta$.
	If $\sigma^2(t)<\Delta$ (and $F(t)\leq\frac{1}{2}$) then $t\leq t_0$, and by the monotonicity of $F_m$ and $F$,
	\[ |F_m(t)-F(t)|
	\leq F_m(t_0)+F(t_0)
	\leq 2 F(t_0) + \sigma(t_0)\sqrt{\Delta} 
	\leq 5 \Delta.
	\qedhere\]
\end{proof}

\begin{corollary}
\label{cor:tail.integration}
	Let $\Delta_m\geq \frac{1}{m}$ and assume that there are numbers $\alpha,\beta>0$ such that the following holds.
	For every $\Delta\geq \Delta_m$, with probability at least $1-2\exp(-\alpha \Delta m)$, for every $t\in\R$ for which $\sigma^2(t)\geq\Delta$ we have that $|F_m(t)-F(t)|\leq \beta \sigma(t)\sqrt{\Delta}$.
	Then 
	\begin{equation} \label{eq:ratio-expectation}
	\E \sup_{  t\in\R  \, :\, \sigma^2(t)\geq\Delta_m } \left| \frac{F_m(t)-F(t)}{ \sigma(t)  \sqrt{\Delta_m}} \right|
	 \leq c_1
\end{equation}
	where $c_1$ is a constant that depends only on $\alpha$ and $\beta$.
\end{corollary}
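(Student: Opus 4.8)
The plan is to deduce \eqref{eq:ratio-expectation} from the hypothesis by tail integration. Set
\[
Z=\sup_{t\in\R\,:\,\sigma^2(t)\geq\Delta_m}\left|\frac{F_m(t)-F(t)}{\sigma(t)\sqrt{\Delta_m}}\right|,
\]
so that $\E Z=\int_0^\infty\P(Z>u)\,du$, and it suffices to establish a sub-exponential tail bound for $Z$. Two observations will be used repeatedly. First, since $|F_m(t)-F(t)|\leq 1$ while $\sigma(t)\geq\sqrt{\Delta_m}$ on the range over which the supremum is taken, we have the deterministic bound $Z\leq 1/\Delta_m$, and in particular $\P(Z>u)=0$ once $u>1/\Delta_m$. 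Second — and this is the main device — given $u$ we do not use the hypothesis at the level $\Delta_m$ but rather at the \emph{$u$-dependent} level $\Delta(u)=c_3 u\Delta_m$, where $c_3=c_3(\beta)\in(0,\tfrac14]$ is a small constant fixed below; we will show that on the corresponding success event one has $Z\leq u$, and since $\Delta_m m\geq 1$ this gives $\P(Z>u)\leq 2\exp(-\alpha\Delta(u)m)=2\exp(-\alpha c_3 u\Delta_m m)\leq 2\exp(-\alpha c_3 u)$.

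The core of the argument is therefore the implication: on the success event of the hypothesis at level $\Delta(u)$, $Z\leq u$. Here we restrict to $u\geq u_0$ for a constant $u_0=u_0(\beta)$ chosen so that $\Delta(u)\geq\Delta_m$ (i.e.\ $u_0\geq 1/c_3$); note that then $\Delta(u)=c_3 u\Delta_m\leq c_3\leq\tfrac14$ whenever $u\leq 1/\Delta_m$, which by the first observation is the only range we need. As in the proofs of Theorem \ref{thm:ratio} and Corollary \ref{cor:ratio} we may assume $F(t)\leq\tfrac12$, the case $F(t)>\tfrac12$ being symmetric, and we split the supremum according to the size of $\sigma^2(t)$. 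If $\sigma^2(t)\geq\Delta(u)$, the hypothesis applied at level $\Delta(u)$ bounds the ratio by $\beta\sqrt{\Delta(u)/\Delta_m}=\beta\sqrt{c_3 u}$, which is at most $u$ as soon as $u\geq\beta^2$. If instead $\Delta_m\leq\sigma^2(t)<\Delta(u)$, we invoke the monotonicity argument from the proof of Corollary \ref{cor:ratio}: choosing $t_\ast$ with $\sigma^2(t_\ast)=\Delta(u)$ and $F(t_\ast)\leq\tfrac12$ (so that $F(t_\ast)\leq 2\Delta(u)$), any such $t$ satisfies $t\leq t_\ast$, hence on the success event $F_m(t)\leq F_m(t_\ast)\leq F(t_\ast)+\beta\sigma(t_\ast)\sqrt{\Delta(u)}=F(t_\ast)+\beta\Delta(u)\leq(2+\beta)\Delta(u)$; consequently $|F_m(t)-F(t)|\leq\max\{F_m(t),F(t)\}\leq(2+\beta)\Delta(u)$, and dividing by $\sigma(t)\sqrt{\Delta_m}\geq\Delta_m$ gives a ratio of at most $(2+\beta)c_3 u$, which is at most $u$ provided $c_3\leq\tfrac1{2+\beta}$.

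It then suffices to take $c_3=\min\{\tfrac14,\tfrac1{2+\beta}\}$ and $u_0=\max\{1/c_3,\beta^2\}$, both depending only on $\beta$: with these choices $\P(Z>u)\leq 2\exp(-\alpha c_3 u)$ for every $u\geq u_0$ (and $\P(Z>u)\leq 1$ trivially for $u<u_0$), so
\[
\E Z=\int_0^\infty\P(Z>u)\,du\leq u_0+\int_{u_0}^\infty 2\exp(-\alpha c_3 u)\,du\leq u_0+\frac{2}{\alpha c_3}=:c_1(\alpha,\beta),
\]
which is exactly \eqref{eq:ratio-expectation}.

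The step I expect to be the real obstacle is the range $\Delta_m\leq\sigma^2(t)<\Delta(u)$. There the normalisation $\sigma(t)$ may be as small as $\sqrt{\Delta_m}$ while $|F_m(t)-F(t)|$ is only controlled at the coarse scale $\Delta(u)$, so the ratio is of order $\Delta(u)/\Delta_m$; this forces the level $\Delta(u)$ to grow only \emph{linearly} in $u$, and rules out the seemingly natural choice $\Delta(u)\sim u^2\Delta_m$ that would make the bound in the first range exactly $u$. What rescues even the linear choice is the ordering of $F$ and $F_m$, which turns the hypothesis — ``$|F_m-F|$ is small where $\sigma^2(t)\geq\Delta(u)$'' — into the pointwise estimate $|F_m(t)-F(t)|\leq(2+\beta)\Delta(u)$ valid for \emph{every} $t$ with $F(t)\lesssim\Delta(u)$. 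Finally, one has to keep $\Delta(u)$ inside the admissible window $[\Delta_m,\tfrac14]$, which is why $c_3\leq\tfrac14$, and it is the deterministic bound $Z\leq 1/\Delta_m$ that confines $u$ to $[0,1/\Delta_m]$ and so makes this automatic.
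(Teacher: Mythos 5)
Your proof is correct and follows essentially the same route as the paper's: apply the hypothesis not at level $\Delta_m$ but at the $u$-dependent level $\Delta\sim u\Delta_m$, transfer that to a bound for all $t$ (including those with $\sigma^2(t)<\Delta$) via the monotonicity argument of Corollary \ref{cor:ratio}, deduce a sub-exponential tail for $Z$, and integrate. The only difference is cosmetic: the paper introduces an auxiliary quantity $Z=\sup_{t\in\R}|F_m(t)-F(t)|/\bigl((4+\beta)\Delta_m+\beta\sigma(t)\sqrt{\Delta_m}\bigr)$ whose form makes the inequality $(4+\beta)\lambda\Delta_m+\beta\sigma(t)\sqrt{\lambda\Delta_m}\leq\lambda\bigl((4+\beta)\Delta_m+\beta\sigma(t)\sqrt{\Delta_m}\bigr)$ (for $\lambda\geq1$) handle both regimes simultaneously, whereas you keep the original normalisation $\sigma(t)\sqrt{\Delta_m}$ and carry out an explicit two-case analysis, which forces you to tune the constant $c_3(\beta)$ and a threshold $u_0(\beta)$ by hand; the resulting bound and its dependence on $\alpha,\beta$ are the same up to constants. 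Your closing remark — that the small-$\sigma^2$ regime forces the level $\Delta(u)$ to grow linearly rather than quadratically in $u$ — is exactly the point that makes the paper's choice $\Delta=\lambda\Delta_m$ (rather than $\lambda^2\Delta_m$) the right one.
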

\begin{proof}
	Let $\Delta\geq \Delta_m$.
	Just as in the proof of Corollary \ref{cor:ratio}, it follows from the monotonicity of $F$ and $F_m$ that with probability at least $1-2\exp(-\alpha \Delta m)$, for every $t\in\R$,
	\[ |F_m(t)-F(t)|\leq  (4+\beta)\Delta + \beta\sigma(t)\sqrt\Delta. \]
	Moreover, set
	\[ Z=\sup_{t\in\R } \frac{|F_m(t) -F(t)|}{(4+\beta)\Delta_m+\beta \sigma(t) \sqrt{\Delta_m} }\]
	so that 
	\[\sup_{t\in\R \, : \, \sigma^2(t)\geq \Delta_m } \frac{|F_m(t) -F(t)|}{\sigma(t) \sqrt{\Delta_m} }
	\leq c(\beta) Z . \]
	For every $\lambda\geq 1$ consider $\Delta=\lambda \Delta_m$ and observe that $\P(Z>\lambda) \leq 2\exp(-\alpha\lambda\Delta_m m)$.
	Indeed, with probability at least
	 $1-2\exp(-\alpha \lambda\Delta_m m)$, for every $t\in\R$,
	\begin{align*}
	|F_m(t) -F(t)|
	&\leq (4+\beta) \lambda  \Delta_m+ \beta \sigma(t) \sqrt{\lambda\Delta_m}   \\
	&\leq \lambda \left((4+\beta)\Delta_m + \beta \sigma(t)\sqrt{\Delta_m} \right),
	\end{align*}
	implying that $Z\leq \lambda$.
	Therefore,  by tail-integration,
	\[\E Z \leq 1 + \int_1^\infty \P\left( Z > \lambda\right) \,d \lambda
	\leq 1+  \int_1^\infty 2\exp(-\alpha\lambda \Delta_m m) \,d \lambda
	= 1+\frac{2}{\alpha}.\qedhere\]
\end{proof}

\section{Proof of  Theorem \ref{thm:loglog.optimal}}

The key to the  proof of Theorem \ref{thm:loglog.optimal} is the following result.

\begin{proposition} \label{prop:lower.bound.loglog}
	Assume that there is a number $\gamma>0$ and a positive  sequence $(\Delta_m)_{m=1}^\infty$ such that 
	\begin{equation} \label{eq:ratio-expectation}
	\E \sup_{  t\in\R  \, : \, \sigma^2(t)\geq\Delta_m } \left| \frac{F_m(t)-F(t)}{ \sigma(t)  \sqrt{\Delta_m}} \right|
	 \leq \gamma.
\end{equation}
	Then there are constants $c_0$ and $c_1$ that depend only on $\gamma$ and for any $m\geq c_0$,  $\Delta_m \geq c_1\frac{\log\log m}{m}$.
\end{proposition}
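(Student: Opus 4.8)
The strategy is to construct a single explicit distribution $F$ (for each $m$) for which the supremum of the normalized deviation is large with non-negligible probability, thereby forcing $\gamma$ to be large unless $\Delta_m$ is at least of order $\frac{\log\log m}{m}$. The natural candidate is a distribution that places, at a geometric sequence of levels $p_\ell$, enough ``independent'' mass so that the local empirical fluctuations behave like independent centered binomials. Concretely, I would pick levels $p_\ell \sim 2^{-\ell}$ for $\ell$ ranging up to roughly $\log_2(1/\Delta_m)$, so that there are about $L \sim \log_2(1/\Delta_m)$ of them, each with $\sigma^2(t_\ell) \geq \Delta_m$. At level $\ell$ the increment $F_m(t_\ell) - F(t_\ell)$ is (up to lower order terms coming from the other levels) a centered sum of $m$ indicators of probability $\approx 2^{-\ell}$, hence has standard deviation $\approx \sigma(t_\ell)/\sqrt m$, and — crucially — these increments across widely separated levels are close to independent. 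A Gaussian-type lower bound (Paley--Zygmund, or a direct small-ball/anticoncentration estimate for sums of Bernoullis in the regime $m 2^{-\ell} \gtrsim 1$, which holds since $2^{-\ell} \geq \Delta_m \geq 1/m$) gives that each normalized increment $|F_m(t_\ell) - F(t_\ell)|/(\sigma(t_\ell)/\sqrt m)$ exceeds some absolute constant times $\sqrt{\log L}$ with probability bounded below by, say, $L^{-1/2}$ or so. Taking a maximum over the $\sim L$ nearly-independent levels, standard extreme-value reasoning for (almost) independent events shows that with constant probability the maximum normalized increment is $\gtrsim \sqrt{\log L}$.

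Putting this together: on that constant-probability event,
\[
\sup_{t : \sigma^2(t)\geq \Delta_m} \left|\frac{F_m(t)-F(t)}{\sigma(t)\sqrt{\Delta_m}}\right|
= \frac{1}{\sqrt{m\Delta_m}} \sup_{t : \sigma^2(t)\geq\Delta_m}\left|\frac{F_m(t)-F(t)}{\sigma(t)/\sqrt m}\right|
\gtrsim \frac{\sqrt{\log L}}{\sqrt{m\Delta_m}},
\]
and taking expectations forces
\[
\gamma \gtrsim \frac{\sqrt{\log L}}{\sqrt{m\Delta_m}},
\qquad\text{i.e.}\qquad
m\Delta_m \gtrsim \frac{\log L}{\gamma^2} \sim \frac{\log\log(1/\Delta_m)}{\gamma^2}.
\]
A short bootstrapping/case analysis then finishes: either $\Delta_m \geq m^{-1/2}$, say, in which case $\log\log(1/\Delta_m) \gtrsim \log\log m$ directly; or $\Delta_m < m^{-1/2}$, in which case the displayed inequality $m\Delta_m \gtrsim \log\log(1/\Delta_m)$ combined with $\Delta_m \geq 1/m$ (which we may assume, since below $1/m$ the statement is trivially true once $c_1$ is small) already yields $\Delta_m \gtrsim \frac{\log\log m}{m}$ after substituting back. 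Care is needed to make the constants depend only on $\gamma$, but everything in sight is absolute once $\gamma$ is fixed.

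The main obstacle is the anticoncentration-plus-independence argument that produces the $\sqrt{\log L}$ factor: one must verify that the deviations at the chosen levels are genuinely (close to) independent, or else work with a cleverly chosen \emph{disjoint-increments} representation. A clean way around the approximate-independence bookkeeping is to replace ``$F_m(t_\ell)-F(t_\ell)$'' by the telescoped differences $\bigl(F_m(t_\ell)-F_m(t_{\ell+1})\bigr)-\bigl(F(t_\ell)-F(t_{\ell+1})\bigr)$, which are sums over \emph{disjoint} blocks of the sample points $\{X_i \in (t_{\ell+1},t_\ell]\}$ and hence \emph{exactly} independent across $\ell$ once we condition on the multiset of counts — or even unconditionally if $F$ is chosen so the block sizes are deterministic-ish. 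One then recovers a lower bound on $\max_\ell |F_m(t_\ell)-F(t_\ell)|$ from a lower bound on the max of the independent telescoped terms (any two-level difference that is large forces one of the two endpoints to be large). The remaining technical point is checking that the chosen $F$ really does satisfy $\sigma^2(t_\ell)\geq \Delta_m$ at all the relevant levels and that $L\sim\log(1/\Delta_m)$, which is immediate from the construction $p_\ell\sim 2^{-\ell}$, $\ell \leq \log_2(1/\Delta_m)$.
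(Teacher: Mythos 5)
Your approach shares the core skeleton with the paper's proof: partition the range into geometrically spaced levels $p_\ell\sim 2^{-\ell}$ up to $\ell\lesssim\log(1/\Delta_m)$, pass to disjoint block increments, apply anticoncentration for Bernoulli/binomial sums, and extract a $\sqrt{\log L}\sim\sqrt{\log\log(1/\Delta_m)}$ lower bound on the max. That is precisely what the paper's Lemma~\ref{thm:main-lower-lemma} does. The substantive difference is where the anticoncentration happens. The paper first applies Gin\'e--Zinn symmetrization to replace the empirical process by a Rademacher process and, crucially, then \emph{conditions on the sample}: after sorting $(X_i)$ and using the isomorphic control from Lemma~\ref{thm:isomorphic}, the symmetrized supremum becomes exactly $\max_\ell \frac{1}{\sqrt\ell}|\sum_{i=1}^\ell\eps_i|$, a quantity in which the Rademacher signs are genuinely independent and the blocks $D_\ell=\sum_{m_{\ell-1}<i\leq m_\ell}\eps_i$ are exactly independent. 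This completely sidesteps what you correctly identify as "the main obstacle'': in your direct approach, the cell counts $\#\{i:X_i\in(t_{\ell+1},t_\ell]\}$ are multinomial, hence negatively dependent, not independent, and the telescoping trick does not by itself make them independent. You would need an additional device (Poissonization, or conditioning on the number of points below a fixed threshold) to recover exact independence, and then translate back. This is doable, but it is a real extra step, not just "bookkeeping''; the paper's symmetrization buys you a cleaner path.

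Two smaller points. First, in your bootstrapping you write that the case $\Delta_m<1/m$ may be dismissed because "the statement is trivially true once $c_1$ is small.'' That is backwards: for $\Delta_m<1/m$ and $m$ large the desired conclusion $\Delta_m\geq c_1\frac{\log\log m}{m}$ would be \emph{violated}, so one must instead rule this case out by showing that \eqref{eq:ratio-expectation} already forces $m\Delta_m\gtrsim\gamma^{-2}$ (a single fixed $t$ with $\sigma^2(t)\geq\Delta_m$ plus a one-point anticoncentration estimate gives this); this is exactly the "WLOG $C_0\leq\Delta_m m$'' reduction in the paper. Second, the claim that the normalized increments are roughly Gaussian so the max over $L$ of them is $\gtrsim\sqrt{\log L}$ needs a quantitative anticoncentration input valid down to counts of size $m2^{-\ell}\sim\log\log m$; the paper supplies this via the Petrov-type comparison in Lemma~\ref{lemma:lower-comparison}, and you would need the analogous statement for binomials. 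With these two fixes your route would go through, but the symmetrization shortcut is genuinely simpler.
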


\begin{proof}[{\bf Proof of Theorem \ref{thm:loglog.optimal}} (given Proposition \ref{prop:lower.bound.loglog})]
	By our assumption and Corollary \ref{cor:tail.integration} (applied with $\Delta_m'=\max\{\Delta_m,\frac{1}{m}\}$),
	\[
	\E \sup_{  t\in\R  \, : \, \sigma^2(t)\geq\Delta_m' } \left| \frac{F_m(t)-F(t)}{ \sigma(t)  \sqrt{\Delta_m'}} \right|
	 \leq c_1(\alpha,\beta).\]
	Proposition \ref{prop:lower.bound.loglog} implies that $\Delta_m' \geq c_2\frac{\log\log m}{m}$ for any $m\geq c_3$, where the constants $c_2$ and $c_3$  depend only on $c_1$.
\end{proof}

The rest of this section is devoted to the proof of Proposition \ref{prop:lower.bound.loglog}.

\vspace{0.5em}
We may and do assume without loss of generality that  
\[C_0
\leq \Delta_m m
\leq C_1 \log\log m\]
for well chosen absolute constants $C_0$ and $C_1$.

Set
\[
Z_m = \sup_{t \in \R \, : \, F(t)\in[ 2\Delta_m,\frac{1}{2}] } \left|\frac{ F_m(t)- F(t)}{\sqrt{ F(t)} \sqrt{\Delta_m}} \right| 
\]
so that by our assumption, $\sup_m \E Z_m \leq \gamma$.
We will show that cannot be the case if $\Delta_m \lesssim \frac{\log\log m}{m}$ thanks to a lower estimate on each $\E Z_m$.

\vspace{0.5em}

Our starting point is the following uniform `isomorphic' relation between $F_m$ and $F$.

\begin{lemma} \label{thm:isomorphic}
There exists an absolute constant $c$ such that the following holds.
For every $\Delta \geq \frac{1}{m}$, with probability at least $1-2\exp(-c \Delta m)$,  for every $t\in\R$ that satisfies $\sigma^2(t)\geq\Delta$ we have
\[
\left| F_m(t) -  F(t) \right|
\leq \frac{\sigma^2(t)}{4}.
\]
\end{lemma}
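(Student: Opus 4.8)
The plan is to prove Lemma~\ref{thm:isomorphic} by the same peeling/chaining strategy already used for Theorem~\ref{thm:ratio}, but now exploiting that the target bound $\sigma^2(t)/4$ is quadratic in $\sigma(t)$ rather than linear, which is exactly what makes a \emph{uniform} statement possible down to the scale $\Delta\sim 1/m$ without the extra $\log\log m$ factor. As before I would reduce to $F(t)\le\frac12$, so that $\tfrac12 F(t)\le\sigma^2(t)\le F(t)$, and it suffices to control $|F_m(t)-F(t)|$ on each dyadic shell $\mathcal{H}_\ell=\{1_{(-\infty,t]}:t\in(t_{\ell+1},t_\ell]\}$ where $F(t_\ell)=2^{-\ell}$, for $1\le\ell\le\log_2\frac1\Delta$.

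On a fixed shell $\mathcal{H}_\ell$, apply Talagrand's concentration inequality (Theorem~\ref{thm:tal-concentration-emp}) with the same expectation bound $\mathcal{E}_\ell\le 4\sqrt{2^{-\ell}/m}$ established in \eqref{eq:expectation-F-j} and with $\sigma_{\mathcal{H}_\ell}^2\le 2^{-\ell}$, but now choosing the deviation parameter $\lambda=c\cdot 2^{-\ell}m$ rather than $\lambda=\Delta m$. With this choice, each of the three terms on the right-hand side of Talagrand's bound is at most a constant multiple of $2^{-\ell}$: indeed $\sqrt{2^{-\ell}/m}\le 2^{-\ell}$ since $2^{-\ell}\ge\Delta\ge 1/m$, the term $\sigma_{\mathcal{H}_\ell}\sqrt{\lambda/m}\le\sqrt{2^{-\ell}\cdot c 2^{-\ell}}=\sqrt c\,2^{-\ell}$, and $\lambda/m=c 2^{-\ell}$. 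Hence, after fixing $c$ small enough, with probability at least $1-2\exp(-c2^{-\ell}m)$ we get $\sup_{h\in\mathcal{H}_\ell}|m^{-1}\sum h(X_i)-\E h|\le 2^{-\ell}/8\le\sigma^2(t)/4$ for every $t\in(t_{\ell+1},t_\ell]$, using $2^{-\ell}\le 2F(t)\le 4\sigma^2(t)$ on that shell.

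Finally I would take a union bound over the shells. The key gain is that the failure probability on shell $\ell$ is $2\exp(-c2^{-\ell}m)$, and since $2^{-\ell}\ge\Delta$ on all relevant shells, the worst shell contributes $2\exp(-c\Delta m)$; summing the geometric-type series $\sum_{\ell}\exp(-c2^{-\ell}m)$ over $1\le\ell\le\log_2\frac1\Delta$ is dominated by its largest term $\exp(-c\Delta m)$ up to an absolute constant (the terms decay doubly-exponentially as $\ell$ decreases), so no $\log\log m$ correction is incurred — this is precisely the point where the quadratic right-hand side pays off compared to Theorem~\ref{thm:ratio}. Adjusting constants, one obtains the stated probability $1-2\exp(-c\Delta m)$, and for $t$ with $\sigma^2(t)<\Delta$ but $F(t)\le\frac12$ the bound is vacuous in this lemma since we only claim it for $\sigma^2(t)\ge\Delta$. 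The case $F(t)>\frac12$ is symmetric.

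The main obstacle is bookkeeping the union bound so that the sum of the shell-wise failure probabilities is genuinely $\lesssim\exp(-c\Delta m)$ rather than $\exp(-c\Delta m)\cdot\log_2\frac1\Delta$; this requires using the \emph{variable} scale $\lambda_\ell\sim 2^{-\ell}m$ (not a single $\lambda\sim\Delta m$ for all shells) and then checking that $\sum_{\ell\ge 1}\exp(-c2^{-\ell}m)\le C\exp(-c\Delta m)$ for $m$ large, which holds because consecutive exponents double. A secondary technical point is verifying that the choice $\lambda=c2^{-\ell}m$ still makes all three Talagrand terms comparable to $2^{-\ell}$ uniformly over the admissible range $\Delta\ge 1/m$; this is where the hypothesis $\Delta\ge 1/m$ (equivalently $2^{-\ell}\ge 1/m$) is used and cannot be relaxed.
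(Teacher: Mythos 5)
Your proof is correct and is essentially the proof the paper sketches: dyadic shells $\mathcal{H}_\ell$, the expectation bound \eqref{eq:expectation-F-j}, Talagrand's inequality with the shell-dependent parameter $\lambda_\ell \sim 2^{-\ell}m$, and a union bound over shells whose failure probabilities decay super-geometrically, so the worst shell dominates and no $\log\log m$ correction appears. One small caveat on the constants: the crude bound $\sqrt{2^{-\ell}/m}\le 2^{-\ell}$ leaves the expectation term of Talagrand's inequality of order $c_{\mathrm{Tal}}\cdot 2^{-\ell}$ independently of how small you take $c$, so ``fixing $c$ small enough'' does not by itself push the supremum below $2^{-\ell}/8$; what rescues the argument is that one only needs the conclusion when $2^{-\ell}m\ge\Delta m\ge C_0$ for a sufficiently large absolute $C_0$ (whence $\sqrt{2^{-\ell}/m}\le 2^{-\ell}/\sqrt{C_0}$), and for $\Delta m< C_0$ the stated probability bound $1-2\exp(-c\Delta m)$ is vacuous once the absolute constant $c$ in the lemma is taken small enough.
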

The proof of Lemma \ref{thm:isomorphic} follows the same path as that of Theorem \ref{thm:ratio}. 
It is outlined here for the sake of completeness.
\begin{proof}[Sketch of proof of Lemma \ref{thm:isomorphic}]
	For  $1\leq \ell\leq \log_2\frac{1}{\Delta}$, let $F(t_\ell)=2^{-\ell} $ and set $\mathcal{H}_\ell= \{ 1_{(-\infty,t]} : t \in(t_{\ell+1},  t_\ell]\}$. 
	By Talagrand's concentration inequality and \eqref{eq:expectation-F-j}, for every $\lambda\geq 0$,  with probability at least $1-2\exp(-\lambda)$,
\begin{align*}
\sup_{h \in  \mathcal{H}_\ell} \left|\frac{1}{m}\sum_{i=1}^m h(X_i) -\E h(X) \right|
&\leq c_1 \left( \sqrt \frac{2^{-\ell}}{ m} + \sqrt{ 2^{-\ell}  }\sqrt \frac{\lambda}{m}  + \frac{\lambda}{m} \right) .
\end{align*}
	The proof is completed by setting $\lambda = c_2 2^{-\ell} m  \sim  c_2 F(t_\ell)m$ for a suitable absolute constant $c_2$, followed by the union bound over $\ell$.
\end{proof}

Invoking Lemma \ref{thm:isomorphic}, it is evident that if $C_0$ is a sufficiently large constant and $\Delta_m m \geq C_0$, then there is an event $\Omega$ of probability at least $1/2$ on which
\begin{equation} \label{eq:isomorphic-est}
\frac{3}{4}F(t) \leq F_m(t)\leq \frac{5}{4} F(t)
\end{equation}
for every $t\in \R $ that satisfies $ F(t)\in[ 2\Delta_m,\frac{1}{2}]$.

Let
\[
\mathcal{H}_m = \left\{ \frac{ 1_{(-\infty,t]} }{\sqrt{F(t)}\sqrt{\Delta_m}} : t\in \R \text{ such that } F(t)\in\left[2 \Delta_m,\frac{1}{2}\right]\right\}
\]
and therefore,
\[
Z_m=\sup_{h\in\mathcal{H}_m} \left|\frac{1}{m} \sum_{i=1}^m h(X_i)-\E h(X)\right| .
\]
 By the Gin\'{e}-Zinn symmetrization inequality,
\[
\E Z_m
\geq \frac{1}{2} \E_X \E_\eps \sup_{h\in \mathcal{H}_m} \left| \frac{1}{m} \sum_{i=1}^m \eps_i h(X_i) \right| -  \sup_{h \in \mathcal{H}_m} \frac{|\E h(X)|}{\sqrt{m}}.
\]
Note that 
\[
\sup_{h \in \mathcal{H}_m} \frac{|\E h(X)|}{\sqrt{m}}
=\sup_{t\in\R \, : \, F(t)\in[2\Delta_m,\frac{1}{2}] } \frac{F(t)}{\sqrt{F(t)\Delta_m m}}
\leq \frac{1}{\sqrt{\Delta_m m }}
\]
and
\[
\sup_{h \in \mathcal{H}_m} \left| \frac{1}{m} \sum_{i=1}^m \eps_i h(X_i) \right|
= \sup_{ t\in\R \, : \, F(t) \in[2\Delta_m, \frac{1}{2}]  } \frac{1}{ \sqrt{m\Delta_m}} \left| \sum_{i=1}^m \eps_i \frac{ 1_{(-\infty,t]}(X_i) }{\sqrt{ mF(t)}}\right|
=\mathcal{U}_m.
\]

Fix $(X_i)_{i=1}^m \in \Omega$ and therefore \eqref{eq:isomorphic-est} is satisfied.
Set $4m\Delta_m \leq \ell \leq \frac{m}{4}$ and let $r_\ell\in\R$ satisfy that $F_m(r_\ell)=\frac{\ell}{m}$.
It follows from \eqref{eq:isomorphic-est} that $F(r_\ell)\in[2\Delta_m,\frac{1}{2}]$ and 
\[ \beta_\ell=\frac{F_m( r_\ell)}{F(r_\ell)} \in\left[\frac{1}{2},2 \right]. \]
If $\pi$ is the permutation of $\{1,\dots,m\}$ satisfying that $X_{\pi(1)}\leq X_{\pi(2)}$ and so on, then for every $4m\Delta_m \leq \ell \leq \frac{m}{4}$,
\[
\left( \frac{ 1_{(-\infty,r_\ell]}(X_{\pi(i)}) }{\sqrt{mF(r_\ell)}}\right)_{i=1}^m
= \sqrt \frac{\beta_\ell}{\ell} \sum_{i=1}^\ell e_i.
\]

Set
\[
V = \left\{\sqrt \frac{\beta_\ell}{\ell} \sum_{i=1}^\ell e_i : 4m \Delta_m \leq \ell \leq \frac{m}{4} \right\},
\]
and thus
\begin{align*}
\E \,\mathcal{U}_m
&\geq \E_X 1_{\Omega} \frac{1}{\sqrt{ m \Delta_m}} \E_\eps \max_{v \in V } \left|\sum_{i=1}^m \eps_i v_i\right|
\\
&\geq \frac{1}{2\sqrt{m \Delta_m}} \E_\eps \max_{4m \Delta_m \leq \ell \leq \frac{m}{4}}  \sqrt \frac{\beta_\ell}{ \ell} \left| \sum_{i=1}^\ell \eps_i \right|.
\end{align*}

Since $\beta_\ell\geq \frac{1}{2}$,
\begin{align*}
\E \max_{4m \Delta_m \leq \ell\leq \frac{m}{4}}  \sqrt \frac{\beta_\ell} {\ell} \left| \sum_{i=1}^\ell \eps_i \right|
&\geq  \frac{1}{\sqrt 2} \E \max_{4m \Delta_m \leq \ell \leq \frac{m}{4}}  \left| \frac{1}{\sqrt{\ell}}  \sum_{i=1}^\ell \eps_i \right|
\\
&\geq \frac{1}{\sqrt 2 }\E \max_{1 \leq \ell \leq \frac{m}{4}}  \left| \frac{1}{\sqrt{\ell}}  \sum_{i=1}^\ell \eps_i \right| - \frac{1}{ \sqrt 2} \E \max_{1 \leq \ell \leq 4m \Delta_m }  \left| \frac{1}{\sqrt{\ell}}  \sum_{i=1}^\ell \eps_i \right|
\\
&=(1) -(2).
\end{align*}

By H\"{o}ffding's inequality (see, e.g.\, \cite[Lemma 2.2]{boucheron2013concentration}) and the union bound, it is evident that
\[
(2)\leq c_1  \sqrt{\log (m \Delta_m)}.
\]
Moreover, we prove in Theorem \ref{thm:main-lower} that if $m\geq  e^e$, then
\[
(1)\geq c_2\sqrt{\log\log m}.
\]
Since $m\Delta _m \leq C_1\log\log m$,  if $m$ is sufficiently large then
$$
(1)-(2)\geq \frac{c_2}{2}\sqrt{\log\log m}.
$$

Combining these observations shows that
\begin{align*}
\E Z_m
\geq  \E \, \mathcal{U}_m - \frac{1}{\sqrt{\Delta_m m }}
& \geq  \frac{(1) -(2) }{ 4 \sqrt{ m \Delta_m}} - \frac{1}{\sqrt{\Delta_m m }} \\
 &\geq \frac{c_2}{8} \sqrt\frac{ \log\log m }{m \Delta_m} -\frac{1}{\sqrt{\Delta_m m }}.
\end{align*}
Finally, as $\Delta_m m\geq 1$ and $\E Z_m\leq \gamma$, it follows that  $\Delta_m m\geq c_3(\gamma) \log\log m$, as claimed.
\qed

\vspace{0.5em}

Next, let us turn to the proof of the wanted estimate on $(1)$.

\begin{theorem} \label{thm:main-lower}
There is an absolute constant $c$ such that for every $r\geq e^e$,
\[
\E \max_{1 \leq \ell \leq r}  \left| \frac{1}{\sqrt{\ell}}  \sum_{i=1}^\ell \eps_i \right|
\geq c\sqrt{\log \log r}.
\]
\end{theorem}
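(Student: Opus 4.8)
The quantity $\max_{1\le \ell\le r}\big|\tfrac{1}{\sqrt\ell}\sum_{i=1}^\ell\eps_i\big|$ is the maximum over dyadic-like scales of normalized random walk increments, so the natural route is to exploit near-independence across well-separated scales and a second-moment / small-ball argument. The plan is as follows. First, restrict attention to the geometric sequence of indices $\ell_k=2^k$ for $0\le k\le \log_2 r$, so there are $K\sim \log\log r$ such indices when we further thin to $k\in\{2^j: j\le \log_2\log_2 r\}$ — actually it is cleaner to take $\ell_k = 2^{2^k}$, $k=0,1,\dots,K$ with $K\sim \log\log r$, so that the partial sums $S_{\ell_k}=\sum_{i=1}^{\ell_k}\eps_i$ have the property that the increments $S_{\ell_{k+1}}-S_{\ell_k}$ are independent and each has variance $\ell_{k+1}-\ell_k \ge \tfrac12 \ell_{k+1}$. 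Thus $Y_k := (S_{\ell_{k+1}}-S_{\ell_k})/\sqrt{\ell_{k+1}-\ell_k}$ are independent, each approximately standard Gaussian (mean $0$, variance $1$, bounded as well), and $\max_k |Y_k|$ is a maximum of $K$ essentially-independent comparable-to-Gaussian variables, hence of order $\sqrt{\log K}\sim\sqrt{\log\log\log r}$ — that is the wrong order, so this crude independence is not enough.

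The correct mechanism is the one behind the law of the iterated logarithm: one should not look at a single increment but at the fact that $\tfrac{1}{\sqrt{\ell}}S_\ell$ genuinely fluctuates like $\pm\sqrt{2\log\log \ell}$. Concretely, I would fix $\ell = r$ (or a single large scale) and instead use the \emph{reflection principle} together with a careful lower bound: the event that the walk $S$ ever reaches level $\lambda\sqrt{r}$ by time $r$ has probability $\ge \tfrac12 \P(S_r \ge \lambda \sqrt r)$, which is $\gtrsim e^{-\lambda^2/2}$ for $\lambda$ up to $\sqrt{c\log(1/\delta)}$. But a single scale only gives $\max_{\ell\le r}\tfrac{1}{\sqrt\ell}|S_\ell| \gtrsim \tfrac{1}{\sqrt r}\max_{\ell\le r}|S_\ell|$, which is $O(1)$ in expectation. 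So the real point is to combine \emph{many} scales: let $A_j$ be the event that for some $\ell$ in the dyadic block $[2^{j},2^{j+1}]$ one has $|S_\ell| \ge t\sqrt{2^j}$; by the reflection principle and the independence of the walk's increments across disjoint time blocks, conditionally on the past the block $[2^j,2^{j+1}]$ contributes a fresh random walk of length $2^j$, and $\P(A_j \mid \mathcal F_{2^j}) \gtrsim e^{-t^2/2}$ uniformly. With $J\sim\log_2 r$ disjoint blocks and these conditional probabilities bounded below, a Borel–Cantelli / second-moment argument shows $\P\big(\text{no }A_j\text{ occurs}\big)\le (1-ce^{-t^2/2})^{J}\le \exp(-cJe^{-t^2/2})$. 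Choosing $t=\sqrt{2\log(J/\log J)}$ — i.e. $t\sim\sqrt{\log\log r}$ — makes this probability bounded away from $1$, so with constant probability $\max_{\ell\le r}\tfrac{1}{\sqrt\ell}|S_\ell| \gtrsim t \gtrsim \sqrt{\log\log r}$, and taking expectation gives the claim (after checking the contribution on the complementary event is negligible, e.g. since the max is always $\ge 0$ and one only needs a lower bound).

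In the write-up I would (i) state the reflection-principle lower bound $\P\big(\max_{\ell\le n}S_\ell \ge t\sqrt n\big)\ge \tfrac12 \P(S_n\ge t\sqrt n) \ge c\,e^{-t^2/2}$ valid for $1\le t\le c'\sqrt n$, citing a standard reference or proving it from the one-dimensional CLT/Hoeffding reverse bound; (ii) partition $\{1,\dots,r\}$ into $J=\lfloor\log_2 r\rfloor$ dyadic blocks $I_j=(2^{j-1},2^j]$; (iii) for $\mathcal F_j:=\sigma(\eps_1,\dots,\eps_{2^{j-1}})$, define $B_j=\{\exists \ell\in I_j:\ |S_\ell - S_{2^{j-1}}| \ge t\,2^{(j-1)/2}\}$ so that $B_j$ depends only on $\eps_i$ for $i\in I_j$, hence the $B_j$ are independent, with $\P(B_j)\ge c e^{-t^2/2}$; (iv) note that on $B_j$, either $|S_\ell|\ge \tfrac t2 2^{(j-1)/2}$ for the relevant $\ell$ or $|S_{2^{j-1}}|\ge \tfrac t2 2^{(j-1)/2}$, and in either case $\max_{\ell\le r}\tfrac1{\sqrt\ell}|S_\ell|\ge \tfrac{t}{2\sqrt2}$; (v) by independence $\P\big(\bigcap_j B_j^c\big)=\prod_j(1-\P(B_j))\le \exp(-cJe^{-t^2/2})$, which is $\le 1/2$ once $e^{-t^2/2}\ge \frac{C\log 2}{J}$, i.e. $t^2\le 2\log(J/C)\sim 2\log\log r$. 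The main obstacle is getting the conditional/independent lower bound clean: one must be careful that $B_j$ is genuinely a function of the increments inside block $I_j$ alone (this is why I center by $S_{2^{j-1}}$), and one must ensure the reflection-principle bound $\P(B_j)\ge c e^{-t^2/2}$ holds for $t$ as large as $\sqrt{2\log\log r}$, which requires $t\le c'\sqrt{2^{j-1}}$; this fails for the smallest few blocks, so I would simply discard blocks with $2^{j-1}\le t^2$, losing only $O(\log\log\log r)$ blocks out of $J\sim\log\log r$ — a lower-order correction that does not affect the bound.
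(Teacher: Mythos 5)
Your strategy is the same one the paper uses: decompose $\{1,\dots,r\}$ into geometrically growing blocks, exploit independence of the increments over disjoint blocks, apply a Cram\'er/Petrov-type \emph{lower} bound on the Rademacher tail, and tune the common threshold $t\sim\sqrt{\log\log r}$ so that with constant probability at least one block event fires. The paper's Lemma~\ref{thm:main-lower-lemma} works with scales $m_\ell=\xi^\ell$ for $\ell\in[s,\eta s]$, and because it aims directly at $S_{m_\ell}/\sqrt{m_\ell\log\log m_\ell}$ it needs a second family of events $\mathcal B_\ell$ (controlled by Hoeffding plus a union bound) ensuring the running partial sum entering block $\ell$ is small. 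Your step~(iv) --- that a large increment $|S_\ell-S_{2^{j-1}}|$ already forces one of $|S_\ell|/\sqrt\ell$ or $|S_{2^{j-1}}|/\sqrt{2^{j-1}}$ to exceed $t/(2\sqrt2)$ by the triangle inequality, so no separate ``running sum is small'' event is needed --- is a genuine, if modest, simplification over the paper's two-family argument. Two small inaccuracies to correct: the Rademacher lower-tail estimate in step~(i) holds in a range of the form $1\le t\le c'n^{1/6}$ (as in the paper's Lemma~\ref{lemma:lower-comparison}), not $t\le c'\sqrt n$; at $t\asymp\sqrt n$ the true rate $n\,I(t/\sqrt n)$ strictly exceeds $t^2/2$, so the bound fails. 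This only means you discard blocks with $2^{j-1}\lesssim t^6$ rather than $t^2$, still $O(\log\log\log r)$ of them out of $J$. And in your last sentence $J\sim\log_2 r$, not $\log\log r$. Neither affects the conclusion.
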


\begin{remark}
	The \emph{law of the iterated logarithm} implies that, with probability 1,
\[ \limsup_{r\to\infty}  \left|  \frac{1}{\sqrt{ 2r \log\log r}}  \sum_{i=1}^r \eps_i \right| = 1.\]
	Hence, for any $r\geq r_0$,
	\begin{align}
	\label{eq:LIL}
	 \E \max_{1 \leq \ell \leq r}  \left| \frac{1}{\sqrt{2 \ell \log\log \ell } }  \sum_{i=1}^\ell \eps_i \right|
\geq \frac{1}{2}.
	\end{align}
	However, \eqref{eq:LIL}  is a weaker statement than the one in Theorem \ref{thm:main-lower-lemma} (where we have $\log\log r$ instead of $\log\log \ell$), and it  does not suffices for the proof of Proposition \ref{prop:lower.bound.loglog}.
\end{remark}

The proof of Theorem \ref{thm:main-lower} is based on a lower bound on the tail of the sum of iid Bernoulli random variables.

\begin{lemma} \label{lemma:lower-comparison}
There are absolute constants $c_0,c_1,c_2$ such that the following holds.
Let $\eps_1,\dots,\eps_n$ be independent, symmetric $\{-1,1\}$-valued random variables and set $g$ to be the standard gaussian random variable.
Then for $0 \leq \lambda \leq c_0n^{1/6}$,
$$
\P\left(\frac{1}{\sqrt{n}}\sum_{i=1}^n \eps_i \geq \lambda \right)
\geq \P(g \geq \lambda) \cdot \exp\left(c_1 \frac{\lambda^3}{n^{1/2}}\right) \left(1+ c_2 \left(\frac{\lambda+1}{\sqrt{n}}\right)\right).
$$
\end{lemma}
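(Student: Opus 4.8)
The plan is to establish a local (pointwise) lower bound on the probability mass function of $S_n = \sum_{i=1}^n \eps_i$ near the point $\lambda\sqrt n$, and then sum it up. Write $S_n = \sum \eps_i$, which takes values in $\{-n, -n+2,\dots,n\}$, and let $k$ be the integer closest to $\lambda\sqrt n$ (of the right parity, say $S_n = k$ is attainable). The exact mass is $\P(S_n = k) = 2^{-n}\binom{n}{(n+k)/2}$, and I would apply a sharp form of Stirling's approximation to $\binom{n}{(n+k)/2}$ to get, for $|k|\le c_0 n^{2/3}$, an expansion of the form
\[
\P(S_n = k) = \sqrt{\frac{2}{\pi n}}\,\exp\!\left(-\frac{k^2}{2n} + O\!\left(\frac{k^4}{n^3}\right) + O\!\left(\frac 1n\right)\right).
\]
This is the classical Edgeworth-type refinement of the de Moivre--Laplace local limit theorem; the error term $k^4/n^3$ is $O(1)$ in the stated range $|k|\lesssim n^{2/3}$, hence the constant $c_0 n^{1/6}$ on $\lambda$ (since $\lambda \le c_0 n^{1/6}$ means $k^2/n \lesssim n^{1/3}$, so $k \lesssim n^{2/3}$). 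Crucially, since the third moment of a symmetric variable vanishes, there is \emph{no} $k^3/n^2$ term; one might worry about its sign otherwise. The point of the lemma is that the \emph{fourth} cumulant of $\eps_i$ is negative ($\E\eps_i^4 - 3 = 1 - 3 = -2$), so the genuine correction to Gaussian behavior in this regime is a factor $\exp(+c\, k^4/n^3)$ with a \emph{positive} constant — but the lemma only claims $\exp(c_1\lambda^3/n^{1/2}) = \exp(c_1 (k/\sqrt n)^3 / \text{...})$, wait, let me restate: $\lambda^3/n^{1/2} = (k/\sqrt n)^3 n^{-1/2}\cdot(\sqrt n)^3/n^{3/2}$, i.e.\ with $k\approx \lambda\sqrt n$ one has $\lambda^3/\sqrt n \approx k^3/n^2$, so the claimed gain is of the size of the (absent, because it vanishes) \emph{third}-cumulant term. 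So in fact the claimed factor $\exp(c_1\lambda^3/n^{1/2})$ is \emph{smaller} than the true fourth-cumulant gain $\exp(c k^4/n^3) = \exp(c\lambda^4/n)$ whenever $\lambda \gtrsim 1$, and for $\lambda \lesssim 1$ both correction factors are $\Theta(1)$; so the inequality has slack and should be provable by keeping the sign of the $k^4/n^3$ term and crudely bounding everything else.

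The steps, in order: (i) reduce to the $\{-1,1\}$ case as stated (the lemma already fixes this); (ii) write $\P(S_n/\sqrt n \ge \lambda) = \sum_{j \ge \lceil\lambda\sqrt n\rceil,\ j\equiv n} \P(S_n = j)$ and, symmetrically, $\P(g\ge\lambda) = \int_\lambda^\infty \varphi$; (iii) use the Stirling expansion above to show $\P(S_n = j) \ge \frac{2}{\sqrt n}\varphi(j/\sqrt n)\exp(c\, j^4/n^3 - C/n)$ uniformly for $\lambda\sqrt n \le j \le c_0 n^{2/3}$, and a matching statement in a window $[\lambda\sqrt n, 2\lambda\sqrt n]$ (say) suffices; (iv) compare the sum to the integral by a Riemann-sum / monotonicity argument: on the relevant window $\varphi(x/\sqrt n)$ decays, so $\frac{2}{\sqrt n}\sum_j \varphi(j/\sqrt n) \ge \int \varphi$ over that window up to a multiplicative $1+O(1/\sqrt n)$ boundary correction, which produces the factor $1 + c_2(\lambda+1)/\sqrt n$; (v) the remaining tail $\int_{2\lambda}^\infty\varphi$ (or beyond the window) is a lower-order fraction of $\P(g\ge\lambda)$ by a standard Gaussian tail estimate, so restricting to the window loses only a constant, which I fold into $c_1$. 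Combining, $\P(S_n/\sqrt n\ge\lambda) \ge \P(g\ge\lambda)\cdot\exp(c\lambda^4/n)\cdot(1+c_2(\lambda+1)/\sqrt n)$ for $\lambda\ge 1$, and since $\lambda^4/n \ge \lambda^3/n^{1/2}$ precisely when $\lambda \ge n^{-1/2}$ — hmm, that's not quite $\lambda\ge 1$; let me instead note $\lambda^3/\sqrt n$ with $\lambda \le c_0 n^{1/6}$ satisfies $\lambda^3/\sqrt n \le c_0^3 n^{1/2}/n^{1/2} = c_0^3$, so the whole factor $\exp(c_1\lambda^3/\sqrt n)$ is bounded by an absolute constant, and we just need $\P(S_n/\sqrt n \ge \lambda) \ge c'\,\P(g\ge\lambda)(1+c_2(\lambda+1)/\sqrt n)$; this is the genuinely useful content and it follows from (iii)–(v) with room to spare.

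The main obstacle is step (iii): getting the local limit expansion with an \emph{explicit, correctly-signed} error term, uniformly over the $j$-window, rather than just an asymptotic statement. One has to be careful that Stirling's bounds $\sqrt{2\pi n}(n/e)^n \le n! \le \sqrt{2\pi n}(n/e)^n e^{1/(12n)}$ feed through the binomial ratio to give two-sided control, and that the Taylor expansion of $(n+k)\log(1+k/n) + (n-k)\log(1-k/n)$ is carried to the $k^4/n^3$ term with the remainder genuinely of order $k^6/n^5 = O(1)$ when $k \lesssim n^{5/6}$ — comfortably true here. A secondary nuisance is the parity constraint (only every other integer is attained), which halves the number of summands but each has twice the "expected" mass, so it washes out; I would handle it by phrasing everything in terms of $\P(S_n = j)$ directly and noting the spacing of attainable $j$ is $2$, matching the $\frac{2}{\sqrt n}$ prefactor. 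Once (iii) is in hand, steps (iv) and (v) are routine comparisons of sums with integrals and standard Gaussian tail bounds, and I would not belabor them.
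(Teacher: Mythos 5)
The paper does not actually prove this lemma: it simply cites it as Theorem 5.23 of Petrov \cite{petrov1995limit}, a Cramér-type moderate-deviations expansion. Your plan — a direct Stirling/local-limit computation followed by a Riemann-sum comparison — is a legitimate alternative route, but the heuristic that drives your claim of ``room to spare'' is inverted by a sign error. The fourth cumulant of a Rademacher variable is indeed $\kappa_4=\E\eps^4-3(\E\eps^2)^2=-2<0$, as you note; but a \emph{negative} fourth cumulant makes the sum \emph{lighter}-tailed than Gaussian, not heavier. Carrying out the very expansion you propose gives
\[
(1+x)\log(1+x)+(1-x)\log(1-x)=x^2+\frac{x^4}{6}+O(x^6)\ \geq\ x^2 ,
\]
so with $x=k/n$,
\[
\P(S_n=k)=\sqrt{\frac{2}{\pi n}}\exp\!\left(-\frac{k^2}{2n}-\frac{k^4}{12\,n^3}+O\!\left(\frac{k^6}{n^5}\right)+O\!\left(\frac{1}{n}\right)\right),
\]
i.e.\ the $k^4/n^3$ term enters with a \emph{minus} sign. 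Equivalently, the Cramér series in Petrov's theorem satisfies $\lambda(t)=-t/12+O(t^2)$ for this distribution, so the exponential correction to the tail ratio is $\exp\bigl(-\lambda^4/(12n)+\cdots\bigr)\le 1$. You also reverse a magnitude comparison: since $\lambda^4/n=(\lambda/\sqrt n)\cdot(\lambda^3/\sqrt n)$ and $\lambda/\sqrt n\leq c_0 n^{-1/3}<1$ in the stated range, one has $\lambda^4/n\leq \lambda^3/\sqrt n$, the opposite of what you assert for $\lambda\gtrsim 1$.

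Together these mean there is no ``slack'' from the fourth-cumulant term: the binomial local probabilities sit strictly \emph{below} the Gaussian density in the moderate-deviation range, and the entire positive content of the lemma must come from the discreteness/boundary effect (the fact that the lattice sum over $j\geq k_0$ of $\P(S_n=j)$ exceeds $\int_\lambda^\infty\varphi$ by a multiplicative $1+\Theta((\lambda+1)/\sqrt n)$), which you defer as ``routine comparisons of sums with integrals.'' That step is in fact the crux: one must check that this positive gain dominates the loss $\exp(-\lambda^4/(12n))$ uniformly on $\lambda\leq c_0 n^{1/6}$, and this is where $c_0$ gets chosen. As written, your steps (iii)--(v) would deliver an \emph{upper} rather than a lower correction from the $k^4$ term, so the argument does not close. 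A practical note: the only thing the paper ever extracts from this lemma is the lower bound \eqref{eq:gaussian-estimate-1}, i.e.\ $\P(S_n/\sqrt n\geq\lambda)\geq (C_3/\lambda)e^{-\lambda^2/2}$ for $C_1\leq\lambda\leq C_2 n^{1/6}$, which is insensitive to the precise signs of $c_1,c_2$; if your goal is to make the paper self-contained, it is cleaner to target that weaker inequality directly, for which your local-limit plus Riemann-sum scheme (with the signs corrected) is perfectly adequate.
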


The proof of Lemma \ref{lemma:lower-comparison} can be found, for example, in \cite[Theorem 5.23]{petrov1995limit}.

\vspace{0.5em}

By a tail estimate on the standard gaussian random variable, it is evident that for $\lambda>0$
$$
\P(g \geq \lambda) \geq \frac{1}{\sqrt{2\pi}\lambda} \exp\left(- \frac{\lambda^2}{2}\right) \left(1 - \frac{c}{\lambda^2}\right).
$$
Therefore, there are absolute constants $C_1,C_2,C_3$ such that for any $C_1\leq \lambda \leq C_2n^{1/6}$,
\begin{align}
\label{eq:gaussian-estimate-1}
\P\left(\frac{1}{\sqrt{n}}\sum_{i=1}^n \eps_i \geq \lambda\right)
\geq  \frac{C_3}{\lambda} \exp\left(-\frac{\lambda^2}{2}\right).
\end{align}

\begin{lemma}
\label{thm:main-lower-lemma}
There are integers $\xi, \eta, s_0 \geq 2$  such that the following holds.
Set $m_s = \xi^s$.
Then for any $s \geq s_0$, with probability at least $0.9$,
$$
\max_{s \leq \ell \leq \eta s} \left| \frac{1}{\sqrt{m_\ell \log \log m_\ell}} \sum_{i=1}^{m_\ell} \eps_i \right| \geq \frac{1}{4}.
$$
\end{lemma}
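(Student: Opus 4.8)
The plan is to reduce everything to independent \emph{block increments}. Write $S_k = \sum_{i=1}^k \eps_i$ and, for $s \le \ell \le \eta s - 1$, let $Y_{\ell+1} = S_{m_{\ell+1}} - S_{m_\ell} = \sum_{i=m_\ell+1}^{m_{\ell+1}}\eps_i$, which is a sum of $n_{\ell+1} := m_{\ell+1}-m_\ell = \xi^\ell(\xi-1)$ independent symmetric signs; for distinct $\ell$ these increments depend on disjoint blocks of the $\eps_i$'s, hence are independent. The crucial (elementary) observation is that a single atypically large increment is enough: set
\[
\theta_\ell = \tfrac14\left(\sqrt{m_{\ell+1}\log\log m_{\ell+1}} + \sqrt{m_\ell\log\log m_\ell}\right),
\qquad
B_\ell = \left\{\, |Y_{\ell+1}| \ge \theta_\ell \,\right\}.
\]
If $B_\ell$ occurs then either $|S_{m_\ell}| \ge \tfrac14\sqrt{m_\ell\log\log m_\ell}$, or, by the triangle inequality, $|S_{m_{\ell+1}}| \ge |Y_{\ell+1}| - |S_{m_\ell}| \ge \theta_\ell - \tfrac14\sqrt{m_\ell\log\log m_\ell} = \tfrac14\sqrt{m_{\ell+1}\log\log m_{\ell+1}}$; since $s \le \ell < \ell+1 \le \eta s$, in both cases the maximum in the statement is $\ge 1/4$. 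So it suffices to show $\P\big(\bigcup_{\ell=s}^{\eta s-1} B_\ell\big) \ge 0.9$, and by independence this probability equals $1 - \prod_{\ell=s}^{\eta s-1}(1-\P(B_\ell))$.

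Next I would establish a uniform lower bound on $\P(B_\ell)$. Dividing by $\sqrt{n_{\ell+1}}$ and using $m_{\ell+1}=\xi m_\ell$,
\[
\lambda_\ell := \frac{\theta_\ell}{\sqrt{n_{\ell+1}}} = \frac14\left(\sqrt{\tfrac{\xi}{\xi-1}\log\log m_{\ell+1}} + \sqrt{\tfrac{1}{\xi-1}\log\log m_\ell}\right),
\]
so $\tfrac14\sqrt{\log\log m_s} \le \lambda_\ell \le \tfrac14\cdot\tfrac{\sqrt\xi+1}{\sqrt{\xi-1}}\sqrt{\log\log m_{\eta s}}$. Since $\log\log m_j = \log j + \log\log\xi$, for $s \ge s_0$ we have $\log\log m_s \ge \tfrac12\log s$ and $\log\log m_{\eta s} \le 2\log s$, hence $\lambda_\ell \to \infty$ and $\lambda_\ell \le \kappa\sqrt{\log s}$ for a constant $\kappa = \kappa(\xi)$ with $\kappa^2 < 2$ (already for $\xi=2$). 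Because $n_{\ell+1} \ge \xi^s$ is exponentially large in $s$ while $\lambda_\ell$ is of order $\sqrt{\log s}$, the admissible window $C_1 \le \lambda_\ell \le C_2 n_{\ell+1}^{1/6}$ of \eqref{eq:gaussian-estimate-1} is met for $s\ge s_0$, and since $\lambda \mapsto \lambda^{-1}e^{-\lambda^2/2}$ is decreasing,
\[
\P(B_\ell) \ge \P\big(Y_{\ell+1} \ge \theta_\ell\big) \ge \frac{C_3}{\lambda_\ell}e^{-\lambda_\ell^2/2} \ge \frac{C_3}{\kappa\sqrt{\log s}}\,s^{-\kappa^2/2} \ge s^{-\rho}
\]
for some $\rho = \rho(\xi) \in (0,1)$ and all $s \ge s_0$.

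Finally, choosing $\eta = 2$ there are $s$ indices $\ell$ in the product, so
\[
\prod_{\ell=s}^{\eta s-1}(1-\P(B_\ell)) \le (1-s^{-\rho})^{s} \le \exp\big(-s^{1-\rho}\big) \le 0.1
\]
for $s \ge s_0$ (after a final enlargement of $s_0$), which is the claim, with $\xi$ as above, $\eta=2$ and this $s_0$. I do not expect a genuine obstacle here: the argument hinges on the triangle-inequality reduction together with the independence of the blocks, and the only delicate points are bookkeeping — verifying the two-sided window for the one-sided Gaussian-type lower tail \eqref{eq:gaussian-estimate-1} and comparing $\log\log m_{\eta s}$ with $\log s$ — the latter being exactly what keeps the exponent $\kappa^2/2$ strictly below $1$, so that $s\cdot\P(B_\ell)\to\infty$ and the union over the $s$ scales occurs with probability tending to $1$.
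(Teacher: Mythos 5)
Your proposal is correct, and it takes a genuinely different (and arguably cleaner) route than the paper's own argument. The paper decomposes the good event into an intersection $\mathcal{A}\cap\mathcal{B}$: some block increment $D_\ell$ must exceed $\tfrac12\sqrt{m_\ell\log\log m_\ell}$ (event $\mathcal{A}$), \emph{and simultaneously} all prefix sums $S_{\ell-1}$ must stay below $\tfrac14\sqrt{m_\ell\log\log m_\ell}$ (event $\mathcal{B}$, controlled by Hoeffding plus a union bound; this is where the paper needs $\xi$ large). Your triangle-inequality observation removes the second requirement entirely: if a single increment $Y_{\ell+1}$ exceeds the \emph{sum} of the two consecutive thresholds, then regardless of whether $S_{m_\ell}$ is small or large you land above the threshold at scale $\ell$ or at scale $\ell+1$. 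This collapses the problem to showing $\P\bigl(\bigcup B_\ell\bigr)\geq 0.9$, which follows from independence of the disjoint blocks and the Gaussian-type lower tail \eqref{eq:gaussian-estimate-1}. You also get a quantitatively sharper handle: your bound $\P(B_\ell)\geq s^{-\rho}$ with $\rho<1$ makes the product $\prod(1-\P(B_\ell))\leq\exp(-s^{1-\rho})$ vanish already with $\eta=2$, whereas the paper's cruder estimate $\P(\mathcal{A}_\ell)\gtrsim\kappa/\ell$ forces it to take $\eta$ large (so that $\kappa\log\eta$ is large). In short: same one-sided tail estimate as the paper's input, but a different and more economical decomposition of the good event, at the modest cost of a slightly larger threshold $\theta_\ell$ (roughly $\tfrac{\sqrt\xi+1}{4}$ instead of $\tfrac12$ times the natural scale), which you correctly verify still keeps the exponent $\kappa^2/2$ strictly below $1$.
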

\begin{proof}
	Let
$$
D_\ell = \sum_{i=m_{\ell-1}+1}^{m_\ell}\eps_i
$$ and thus
\[ S_\ell=\sum_{i=1}^{m_\ell}\eps_i
	 = \sum_{i=1}^{m_{\ell-1}}\eps_i  + \sum_{i=m_{\ell-1}+1}^{m_\ell}\eps_i
	 = S_{\ell-1}+D_\ell.
\]
Consider the events
	\begin{align*}
	 \mathcal{A}_\ell
	&= \left\{ D_\ell \geq \frac{1}{2} \sqrt{ m_\ell \log\log m_\ell } \right\}
	\quad\text{and}\\ 
	\mathcal{B}_\ell
	&=\left\{ |S_{\ell-1}| \leq \frac{1}{4} \sqrt{ m_\ell \log\log m_\ell } \right\} ,
	\end{align*}
and note that on the intersection of
	\[
\mathcal{A}=\bigcup_{s\leq \ell \leq \eta s} \mathcal{A}_\ell \quad\text{and}\quad
	\mathcal{B}=\bigcap_{s \leq \ell \leq \eta s} \mathcal{B}_\ell,
\]
there exists $s\leq \ell\leq \eta s$ for which $ \frac{1}{\sqrt{ m_\ell \log\log m_\ell } } S_\ell\geq \frac{1}{4}$. Thus,
$$
\P \left(\max_{s \leq \ell \leq \eta s} \left| \frac{1}{\sqrt{m_\ell \log \log m_\ell}} \sum_{i=1}^{m_\ell} \eps_i \right| \geq \frac{1}{4} \right) \geq \P(\mathcal{A}\cap \mathcal{B}),
$$
and it suffices to establish that $\P(\mathcal{A}\cap \mathcal{B})\geq 0.9$.
	
	\vspace{0.5em}
To show that $\P(\mathcal{A})\geq 0.99$, observe that $\P(\mathcal{A}_\ell)\geq \kappa / \ell$ for a constant $\kappa(\xi)\sim \frac{1}{\log\xi}$.
	Indeed, set
	\[\lambda = \frac{1}{2}\sqrt{ \frac{ m_\ell }{ m_\ell - m_{\ell-1} } \log\log m_\ell }\]
	and therefore
	\[
\P(\mathcal{A}_\ell)
	=\P\left( \frac{1}{ \sqrt{ m_\ell - m_{\ell-1}} } \sum_{i=m_{\ell-1}+1}^{m_\ell} \eps_i  \geq \lambda \right).
\]
	Since $\xi\geq 2$,
	\[\frac{ m_\ell}{m_\ell - m_{\ell-1}} = \frac{\xi}{\xi-1}\in[1,2],
\]
	and in particular $C_1\leq \lambda^2\leq \log\log m_\ell$.
	Moreover, for $\ell\geq s\geq s_0(\xi)$  we have that $\lambda\leq C_2 (m_\ell - m_{\ell-1})^{1/6}$.
	Thus, by \eqref{eq:gaussian-estimate-1},
	\begin{align*}
	\P(\mathcal{A}_\ell)
	&\geq  \frac{C_3}{\lambda}\exp\left(-\frac{\lambda^2}{2} \right) \\
	&\geq \frac{C_3}{ \sqrt{\log\log m_\ell}} \exp\left( -\frac{1}{2} \log\log m_\ell \right)
	\geq \frac{\kappa(\xi)}{\ell}
	\end{align*}
	for $\kappa(\xi)\sim\frac{1}{\log \xi}$, as claimed.

	Now, by the independence of the sets $\mathcal{A}_\ell$,
	\begin{align*}
	\P(\mathcal{A})
	&=1-\prod_{\ell = s }^{\eta s} \P(\mathcal{A}_\ell^c)
	\geq 1-\exp\left( \sum_{\ell = s }^{\eta s} \log\left(1-\frac{\kappa}{\ell} \right) \right) ,
	\end{align*}
	and if $s\geq 5 \kappa$ then $\log(1-\frac{\kappa}{\ell})\geq -2\frac{\kappa}{\ell}$ for every $\ell\geq s$.
	Hence, $\sum_{\ell = s }^{\eta s} \log(1-\frac{\kappa}{\ell} )\geq - c_1\kappa \log \eta$ and 
	\[\P(\mathcal{A})
	\geq 1-\exp(-c_1\kappa\log \eta )
	\geq 0.99,
\]
	where the last inequality holds if $\log\eta \geq c_2(\kappa)$.

	\vspace{0.5em}
	To prove that  $\P(\mathcal{B})\geq 0.99$ one may use H\"{o}ffding's inequality:
\begin{align*}
 \P( \mathcal{B}^c)
&\leq \sum_{\ell = s }^{\eta s} \P(\mathcal{B}_\ell^c)
=  \sum_{\ell = s }^{\eta s} \P\left(  \left|\frac{1}{\sqrt{ m_{\ell-1} } } S_{\ell-1} \right| > \frac{1}{4} \sqrt{\xi \log\log m_\ell } \right)
\\
&\le 2\sum_{\ell = s }^{\eta s} \exp\left( - c_3 \xi \log\log \xi^\ell \right)
\leq 0.01
\end{align*}
provided that $\xi$ is  sufficiently large (and note that the choice of $\xi$ is independent of the choices of $\eta$ and $s$).
\end{proof}

\begin{proof}[\bf Proof of Theorem \ref{thm:main-lower}.]
	Since $\E|\varepsilon_1|=1$, it suffices to prove the claim only for  $r\geq c_0$.
	Using the notation of Lemma \ref{thm:main-lower-lemma}, let $c_0$ be sufficiently large to ensure that the largest integer $s$ for which $ m_{\eta s} \leq r$ satisfies $s \geq s_0$; in particular, $r \leq m_{\eta(s+1)}$. 
	It follows that
	\[
\max_{1\leq \ell \leq r} \left| \frac{1}{\sqrt \ell} \sum_{i=1}^\ell \eps_i \right|
	\geq \max_{s \leq \ell \leq \eta s} \left| \frac{1}{\sqrt{m_\ell }}\sum_{i=1}^{m_\ell} \eps_i \right|
	=(\ast),
	\]
	and by Lemma \ref{thm:main-lower-lemma}, with probability at least $0.9$, $(\ast)\geq \frac{1}{4}\sqrt{\log\log m_s}$.
	
	The final observation is straightforward:
	If $c_0$ (and hence $s$) is sufficiently large, then
	\begin{align*}
	\log \log r
	\leq \log \log m_{\eta (s+1)}
	&= \log \left( \eta (s+1)  \log \xi \right) \\
	&\leq  \log 2\eta + \log \log m_s
	\leq 2 \log\log m_s,
	\end{align*}
and the claim follows.
\end{proof}

\section{Proof of Proposition  \ref{lem:lower.bound.probab}}

The proof of Proposition  \ref{lem:lower.bound.probab} can be established using well-known lower bounds on the binomial distribution. The proof presented here follows a slightly different path and is based on a result due to Montgomery-Smith \cite{montgomery1990distribution}.  
To formulate that result, denote by $\lfloor \lambda \rfloor$ the largest integer smaller than $\lambda$, and for $w \in \R^m$ let $w^\ast$  be the monotone non-increasing rearrangement of $(|w_i|)_{i=1}^m$.

\begin{lemma} 
\label{lemma:MS}
There are absolute constants $c_1,c_2,c_3$ such that for every $0<\lambda < m-1$ and $w\in\R^m$, with probability at least $c_1\exp(-c_2\lambda)$,
	\[
\left| \sum_{i=1}^m \eps_i w_i \right|
	\geq c_3\left( \sum_{i=1}^{ \lfloor \lambda \rfloor } w_i^\ast + \sqrt{\lambda}\left(\sum_{i=\lfloor \lambda\rfloor +1 }^m (w_i^\ast)^2 \right)^{1/2} \right).
\]
\end{lemma}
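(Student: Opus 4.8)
The statement to prove is Lemma~\ref{lemma:MS}, Montgomery-Smith's lower bound on the tail of a Rademacher sum. Let me sketch a proof plan.

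\medskip

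The plan is to deduce the lower tail bound from the classical two-sided estimate of Montgomery-Smith on the \emph{quantiles} of a Rademacher sum, combined with a Paley--Zygmund type argument to convert a moment/quantile statement into a statement about probabilities. First I would recall that for any $w\in\R^m$ and $p\ge 1$,
\[
\left\| \sum_{i=1}^m \eps_i w_i \right\|_{L_p} \sim \sum_{i\le p} w_i^\ast + \sqrt p \Big( \sum_{i>p} (w_i^\ast)^2 \Big)^{1/2},
\]
which is the content of \cite{montgomery1990distribution} (the equivalence of the $L_p$ norm with the "$K$-functional" $\sum_{i\le p}w_i^\ast + \sqrt p(\sum_{i>p}(w_i^\ast)^2)^{1/2}$). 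Write $R = \sum_i \eps_i w_i$ and let $\Phi(\lambda)$ denote the right-hand side of the displayed bound in the lemma, so that $\|R\|_{L_\lambda}\sim \Phi(\lambda)$ for integer $\lambda$ (and by monotonicity for all $\lambda\ge 1$, up to constants).

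\medskip

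The key step is then a Paley--Zygmund / norm-comparison argument at two nearby exponents. Fix $\lambda$ with $0<\lambda<m-1$; without loss of generality $\lambda\ge 1$ (for $\lambda<1$ the bound is about a $\Theta(1)$-probability event and follows from $\|R\|_{L_2}\sim\|R\|_{L_1}$ and Paley--Zygmund directly). Compare the $L_{2\lambda}$-norm and the $L_\lambda$-norm: by the Montgomery-Smith equivalence and elementary manipulation of the functional $\Phi$ (splitting the sums at $2\lambda$ vs.\ $\lambda$, and using $\sqrt{2\lambda}\le\sqrt2\sqrt\lambda$ together with the fact that the extra terms $w_{\lambda+1}^\ast,\dots,w_{2\lambda}^\ast$ are each at most $w_{\lambda+1}^\ast\le(\sum_{i>\lambda}(w_i^\ast)^2/\lambda)^{1/2}$), one gets $\|R\|_{L_{2\lambda}}\le A\|R\|_{L_\lambda}$ for an absolute constant $A$. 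Then the standard Paley--Zygmund inequality in the form
\[
\P\big(|R| \ge \tfrac12 \|R\|_{L_\lambda}^{} \big) \ \ge\ c\,\Big( \frac{\|R\|_{L_\lambda}}{\|R\|_{L_{2\lambda}}}\Big)^{c'\lambda} \ \ge\ c\,A^{-c'\lambda} \ =\ c_1\exp(-c_2\lambda),
\]
which follows from $\|R\|_{L_\lambda}^\lambda \le (\tfrac12\|R\|_{L_\lambda})^\lambda \P(|R|<\tfrac12\|R\|_{L_\lambda})^{?} + \cdots$ — more precisely, writing $\E|R|^\lambda \le (\tfrac12)^\lambda\|R\|_{L_\lambda}^\lambda + \E|R|^\lambda \IND_{\{|R|\ge \frac12\|R\|_{L_\lambda}\}}$ and bounding the last term by Cauchy--Schwarz (Hölder) using $\|R\|_{L_{2\lambda}}$, one isolates $\P(|R|\ge\frac12\|R\|_{L_\lambda})\ge c\,(\|R\|_{L_\lambda}/\|R\|_{L_{2\lambda}})^{2\lambda}$. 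Since $\|R\|_{L_\lambda}\gtrsim\Phi(\lambda)$, on that event $|R|\gtrsim\Phi(\lambda)$, which is exactly the assertion with $c_3$ absorbing all the constants and $\lfloor\lambda\rfloor$ vs.\ $\lambda$ costing only another constant factor.

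\medskip

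I expect the main obstacle to be purely bookkeeping: verifying the norm comparison $\|R\|_{L_{2\lambda}}\lesssim\|R\|_{L_\lambda}$ cleanly for \emph{all} real $\lambda\in(1,m-1)$ rather than just integers (handling $\lfloor\lambda\rfloor$ carefully, and the boundary cases $\lambda$ close to $m$), and making sure the Paley--Zygmund exponent produces $\exp(-c_2\lambda)$ with the right dependence. There is also a small subtlety that the Montgomery-Smith equivalence has an implicit constant, so the two-sided bounds must be chained in the correct direction. None of this is deep; alternatively, as the paragraph preceding the lemma notes, one could instead bound the $w_i^\ast$'s by a suitable multiple of an indicator vector and reduce to the binomial tail $\P(\sum_{i=1}^{\lfloor\lambda\rfloor}\eps_i = \lfloor\lambda\rfloor) = 2^{-\lfloor\lambda\rfloor}$ together with a separate Gaussian-regime estimate for the $\ell_2$ part, but the $L_p$-route above is more uniform and avoids splitting into cases.
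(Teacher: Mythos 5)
The paper does not prove Lemma~\ref{lemma:MS}; it is quoted directly from Montgomery-Smith's 1990 paper, so there is no internal argument to compare against. Your route — the moment equivalence $\|\sum_i\eps_iw_i\|_{L_p}\sim\Phi(p):=\sum_{i\le p}w_i^*+\sqrt p\bigl(\sum_{i>p}(w_i^*)^2\bigr)^{1/2}$ combined with Paley--Zygmund at the two exponents $\lambda$ and $2\lambda$ — is a correct and well-known way to pass from moments to a lower tail bound. The key computational step, the doubling inequality $\Phi(2\lambda)\le C\,\Phi(\lambda)$, works exactly as you describe: the extra middle terms $w_{\lfloor\lambda\rfloor+1}^*,\dots,w_{\lfloor2\lambda\rfloor}^*$ contribute at most $\sqrt{\lambda+1}\bigl(\sum_{i>\lfloor\lambda\rfloor}(w_i^*)^2\bigr)^{1/2}$ by Cauchy--Schwarz, and the $\ell_2$-tail term only improves when truncated later; this yields an absolute constant $A$ with $\|R\|_{L_{2\lambda}}\le A\|R\|_{L_\lambda}$, whence $\P\bigl(|R|\ge\tfrac12\|R\|_{L_\lambda}\bigr)\ge(1-2^{-\lambda})^2A^{-2\lambda}\ge\tfrac14 e^{-c\lambda}$ for $\lambda\ge1$. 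Two caveats worth making explicit. First, the moment formula $\|R\|_{L_p}\sim\Phi(p)$ is usually credited to Hitczenko rather than to \cite{montgomery1990distribution}, whose main theorem is precisely the two-sided \emph{tail} estimate that Lemma~\ref{lemma:MS} states; as written, your proposal risks circularity, so you should note that the lower moment bound $\|R\|_{L_p}\gtrsim\Phi(p)$ admits a direct proof (condition on the signs of the top $\lfloor p\rfloor$ coordinates to get the $\ell_1$-head, and treat the $\ell_2$-tail separately, splitting on whether $w_{\lfloor p\rfloor+1}^*$ is large or small relative to $\bigl(\sum_{i>\lfloor p\rfloor}(w_i^*)^2/p\bigr)^{1/2}$) that does not appeal to exponential tail lower bounds. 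Second, the doubling comparison should be checked when $2\lambda>m$, where $\Phi(2\lambda)=\|w\|_1$; the same Cauchy--Schwarz split still gives $\Phi(2\lambda)\lesssim\Phi(\lambda)$, but since the hypothesis only restricts $\lambda<m-1$, this case does occur.
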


Again, we only consider the case $F(t)\leq \frac{1}{2}$, and in particular $\frac{1}{2}F(t)\leq \sigma^2(t)\leq F(t)$.

\vspace{0.5em}

Let  $(X_i')_{i=1}^m$ be  independent copies of $X$ that are also independent of $(X_i)_{i=1}^m$ and for $1\leq i \leq m$ set
	\[
W_i=1_{(-\infty,t]}(X_i)-1_{(-\infty,t]}(X_i').
\]
	Note that if $Z$ and $Z'$ are independent random variables with the same distribution and $\lambda\geq 0$, then  $\P(|Z-Z'|\geq 2\lambda)\leq 2 \P(|Z|\geq \lambda)$; therefore,
	\begin{align}
	\label{eq:lower.symm}
	\begin{split}
	\P\left( \left|\frac{1}{m} \sum_{i=1}^m 1_{(-\infty,t]}(X_i)-\E 1_{(-\infty,t]}(X) \right| \geq \lambda\right)
	&\geq \frac{1}{2} \P\left( \left|\frac{1}{m} \sum_{i=1}^m  W_i \right| \geq 2\lambda\right)\\
	&= \frac{1}{2} \P\left( \left|\frac{1}{m} \sum_{i=1}^m \varepsilon_i W_i \right| \geq 2\lambda\right),
	\end{split}
	\end{align}
	where the equality holds by the symmetry and independence of the $W_i$s.
	
	Clearly,
	\[
\P(|W_i|=1)=2F(t)(1-F(t))\geq F(t) \geq \Delta,
\]
and if $\Delta m\geq 8$, then by Markov's inequality, with probability at least $\frac{1}{2}$,
\[
|\{ i : |W_i|=1\}|
\geq \frac{1}{2}F(t)m.
\]
Denote that event by $\Omega$, fix  a realization $(W_i)_{i=1}^m \in\Omega$ and apply Lemma \ref{lemma:MS} for $w=(W_i)_{i=1}^m$ and $\lambda= \frac{1}{4}\Delta m$. Hence,
\[
\P_\eps\left( \left|\frac{1}{m} \sum_{i=1}^m \varepsilon_i W_i \right| \geq \frac{c_1}{4}\sqrt{ F(t) \Delta} \right) \geq c_2\exp\left(-\frac{c_3}{4}\Delta m\right),
\]
and by Fubini's theorem,
	\begin{align*}
	\P\left( \left|\frac{1}{m} \sum_{i=1}^m \varepsilon_i W_i \right| \geq  \frac{c_1}{4} \sqrt{F(t) \Delta} \right)
&\geq \E_{Z} 1_{\Omega} \P_\eps\left( \left|\frac{1}{m} \sum_{i=1}^m \varepsilon_i W_i \right| \geq
\frac{c_1}{4}\sqrt{F(t) \Delta} \right)
\\
	&\geq \frac{c_2}{2}\exp\left(-\frac{c_3}{4}\Delta m\right).
	\hfill
	\end{align*}
Recalling that $F(t)\sim \sigma^2(t)$, the claim follows from \eqref{eq:lower.symm}.
	\qed

\section{The `no cancellation' range} \label{sec:no.cancellation}

Next, let us turn to the regime in which $\sigma^2(t)$ is small relative to $\Delta$. 
In that range there are no significant cancellations between $\frac{1}{m}\sum_{i=1}^m 1_{(-\infty,t]}(X_i)$ and $F(t)=\E1_{(-\infty,t]}(X)$, and as a result,
\begin{align*}
|F_m(t)-F(t)| \sim 
\begin{cases}
\max\{F_m(t),F(t)\} &\text{if }F(t)\leq\frac{1}{2} ,\\
\max\{1-F_m(t),1-F(t)\} &\text{if }F(t)>\frac{1}{2}.
\end{cases}
\end{align*}
Specifically, we have the following:

\begin{theorem}
\label{thm:ratio.full.range}
	There are absolute constants $c_0,c_1,c_2$ such that the following holds.
	For every   $\Delta \geq c_0 \frac{ \log \log m }{m} $, 	 with probability at least $1-2\exp(-c_1\Delta m)$, 
	\[
	| F_m(t) - F(t) |
	\leq
	c_2
	\begin{cases}
 	\dfrac{\Delta}{\log(\Delta/\sigma^2(t))}
 		&\text{if } \sigma^2(t) \in \left( \frac{1}{10m} \exp(-\Delta m) , \frac{\Delta}{10}  \right] , \\
 		\\
 	\sigma^2(t)
 	 	 &\text{if } \sigma^2(t) \leq  \frac{1}{10m} \exp(-\Delta m).
	\end{cases}
	\]
\end{theorem}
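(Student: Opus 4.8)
The plan is to use the no-cancellation heuristic $|F_m(t)-F(t)|\le \max\{F_m(t),F(t)\}$ (valid when $F(t)\le\frac12$), so that it suffices to bound $F(t)$ and $F_m(t)$ separately and uniformly in $t$. As in the proof of Theorem~\ref{thm:ratio} I restrict to $F(t)\le\frac12$, hence $\frac12 F(t)\le\sigma^2(t)\le F(t)$, and here also to the range $\sigma^2(t)\le\frac{\Delta}{10}$ (the complementary range is exactly what Theorem~\ref{thm:ratio} covers). The "true" part is immediate: since $F(t)\le 2\sigma^2(t)$ and $y\mapsto y\log(1/y)$ is increasing on $(0,\tfrac1{10}]$, a one-line check gives $2\sigma^2(t)\le c\,\Delta/\log(\Delta/\sigma^2(t))$ in the first subrange and $F(t)\le 2\sigma^2(t)$ in the second; so everything reduces to a uniform upper bound on $F_m(t)$.

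The second subrange is essentially trivial. If $\sigma^2(t)\le\frac1{10m}e^{-\Delta m}$ then $F(t)\le\frac1{5m}e^{-\Delta m}$; letting $t^\ast$ be the largest such point, a union bound gives $\P(F_m(t^\ast)>0)\le mF(t^\ast)=\frac15 e^{-\Delta m}$. On the complement $F_m\equiv 0$ on $(-\infty,t^\ast]$, and since every $t$ in the second subrange satisfies $t\le t^\ast$, we get $|F_m(t)-F(t)|=F(t)\le 2\sigma^2(t)$ there.

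For the first subrange the key tool is the Chernoff bound $\P(\mathrm{Bin}(m,p)\ge k)\le(emp/k)^k$, which for small $p$ decays super-exponentially in $k$; this is the source of the $\log(\Delta/\sigma^2)$ gain, which Talagrand's inequality (used in Theorem~\ref{thm:ratio}) cannot see. I would dyadically decompose the scale of $F$: for each integer $j$ with $2^{-j}\in[\frac1{10m}e^{-\Delta m},\frac{\Delta}{5}]$ fix $b_j$ with $F(b_j)=2^{-j}$ and set $k_j=\lceil c_2 m\Delta/\log(\Delta 2^j)\rceil$ for a suitably large absolute constant $c_2$. Since $2^{-j}\le\Delta/5$ one has $x:=\Delta 2^j\ge 5$, so $\log x>1$ and $k_j/(m2^{-j})\ge c_2 x/\log x>e$, whence the Chernoff bound applies; using $\log\log x\le\frac1e\log x$ for $x\ge e$ one gets $\log\frac{c_2x}{e\log x}\ge(1-\tfrac1e)\log x\ge\tfrac12\log x$, hence $\P(F_m(b_j)\ge k_j/m)\le\exp(-k_j\log\frac{c_2x}{e\log x})\le\exp(-\tfrac{c_2}{2}\Delta m)$. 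By monotonicity of $F_m$, on the event that $F_m(b_j)<k_j/m$ for every such $j$, any $t$ with $F(t)\in(2^{-(j+1)},2^{-j}]$ in the first subrange satisfies $F_m(t)\le F_m(b_j)<k_j/m\le c\,\Delta/\log(\Delta/\sigma^2(t))$, where one uses $\sigma^2(t)\ge\frac12 F(t)>2^{-(j+2)}$ to replace $\log(\Delta 2^j)$ by $\log(\Delta/\sigma^2(t))$ up to constants. Combined with the bound on $F(t)$, this gives the claim.

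The one point requiring care is the union bound over the dyadic scales. Their number is $O(\log m+\Delta m)$, and it genuinely can be of order $\log m$ when $\Delta$ sits near its lower threshold, since $2^{-j}$ runs down to order $\frac1m$. Thus the union bound costs a factor $O(\log m+\Delta m)$; because $\Delta m\ge c_0\log\log m$ we have $\log m+\Delta m\le 2e^{\Delta m/c_0}$ for $m$ large, and choosing $c_0$ large relative to $c_2$ absorbs this into $\exp(\tfrac{c_2}{4}\Delta m)$, leaving probability at least $1-2\exp(-c_1\Delta m)$ with $c_1=c_2/4$. This calibration of $c_0$ against the Chernoff constant is the only place the $\frac{\log\log m}{m}$ threshold is used in an essential way, and it is the main (mild) obstacle; the Chernoff estimate itself is routine.
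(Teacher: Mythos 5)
Your approach is sound and proves the stated bound, but it deviates from the paper's in a couple of places. The paper splits the small-$\sigma^2$ region into three subranges of $F(t)$: on $\left(\tfrac{\Delta}{10}e^{-\Delta m},\tfrac{\Delta}{5}\right]$ it applies Bennett's inequality at the dyadic anchors $F(x_\ell)=2^{-\ell}\Delta$ and takes a union bound over the $O(\Delta m)$ relevant $\ell$, then uses monotonicity; on $\left(\tfrac{1}{10m}e^{-\Delta m},\tfrac{\Delta}{5}e^{-\Delta m}\right]$ it shows that with probability $1-e^{-c\Delta m}$ at most one observation falls below the right endpoint; and on $\left(0,\tfrac{1}{5m}e^{-\Delta m}\right]$ it shows that none do. You fuse the first two cases into a single dyadic scheme running all the way down to $\tfrac{1}{10m}e^{-\Delta m}$ and replace Bennett by the elementary upper-tail binomial bound $\P(\mathrm{Bin}(m,p)\ge k)\le(emp/k)^k$, bounding $F_m(t)$ alone and absorbing $F(t)$ deterministically into the target. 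This is a modest streamlining, and the binomial tail makes the $\log(\Delta/\sigma^2(t))$ gain quite transparent; the price is a bit of care at the bottom of the dyadic range, where $k_j$ degenerates to a bounded constant, which your estimate does handle. One remark in your closing paragraph is off, though inconsequential: the number of dyadic scales is not ``$O(\log m+\Delta m)$, possibly of order $\log m$''. The levels run from $\tfrac{\Delta}{5}$ down to $\tfrac{1}{10m}e^{-\Delta m}$, whose ratio is $2\Delta m\,e^{\Delta m}$, so there are always $O(\Delta m)$ scales --- exactly the count the paper makes. The union-bound cost $O(\Delta m)\le e^{\Delta m}$ is therefore absorbed simply by taking the Chernoff constant $c_2$ large; no calibration of $c_0$ against $c_2$ is required, and indeed the restriction $\Delta\gtrsim\tfrac{\log\log m}{m}$ plays no essential role in this theorem --- both the paper's proof and yours go through whenever $\Delta m$ exceeds an absolute constant.
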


\begin{remark}
As we show in what follows, in the range $\left( \frac{1}{10m} \exp(-\Delta m) , \frac{1}{10} \Delta \right]$ the dominant term is $F_m(t)$, while in $(0,\frac{1}{10 m} \exp(-\Delta m)]$ the dominant term is $F(t)$.
\end{remark}

Theorem \ref{thm:ratio.full.range} is complemented by a lower bound that holds for every $t\in\R$ for which $\sigma^2(t)\leq \frac{\Delta}{10}$.
 In particular, the uniform upper estimate in Theorem \ref{thm:ratio.full.range} cannot be improved in a rather strong sense.

\begin{proposition}
\label{prop:lower.full}
	There are absolute constants $c_0,c_1,c_2$ such that the following holds.
	Let $\Delta \geq \frac{c_0}{m}$ and consider $t\in\R$ for which $\sigma^2(t) \leq \frac{\Delta}{10}$.
	Then, with probability at least $2\exp(-c_1\Delta m)$,
	\[
	\left| F_m(t) - F(t) \right|
	\geq  c_2
\begin{cases}
 	\dfrac{\Delta}{\log(\Delta/\sigma^2(t))}
 		&\text{if } \sigma^2(t) \in \left( \frac{1}{10m} \exp(-\Delta m) , \frac{\Delta}{10}  \right] , \\
 	 \\
 	\sigma^2(t)
 	 	 &\text{if } \sigma^2(t) \leq  \frac{1}{10m} \exp(-\Delta m).
	\end{cases}
	\]
\end{proposition}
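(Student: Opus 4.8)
The plan is to treat the two ranges in Proposition~\ref{prop:lower.full} separately and by rather different means, after the standard reductions to $F(t)\le\tfrac12$ (so that $\tfrac12 F(t)\le\sigma^2(t)\le F(t)$) and to $\Delta\le\tfrac14$. Throughout I would write $p=F(t)$ and $u=\Delta/p$; since $p\le 2\sigma^2(t)\le\Delta/5$ one has $u\ge 5$ and $p\le\tfrac1{20}$, and hence, from $\sigma^2(t)=p(1-p)$, the comparison $\log u\le\log(\Delta/\sigma^2(t))\le\log u+\log\tfrac{20}{19}$. In particular the target $\theta:=\Delta/\log(\Delta/\sigma^2(t))$ satisfies $\tfrac p\theta=\tfrac{\log(\Delta/\sigma^2(t))}{u}\le\tfrac12$ (because $(\log u+\log\tfrac{20}{19})/u$ decreases in $u\ge 5$ and is already below $\tfrac12$ at $u=5$) as well as $\theta\log u\le\Delta$; these two inequalities are the only structural facts I would use.

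For the small-variance range $\sigma^2(t)\le\tfrac1{10m}e^{-\Delta m}$ the argument is immediate: then $mp\le\tfrac15 e^{-\Delta m}<\tfrac12$, so $\P(F_m(t)=0)=(1-p)^m\ge 1-mp\ge\tfrac12$, and on that event $|F_m(t)-F(t)|=F(t)\ge\sigma^2(t)$. Since $\tfrac12\ge 2e^{-c_1\Delta m}$ as soon as $\Delta m\ge c_0$, this already gives the claim with constant $c_2=\tfrac12$ (indeed with $c_2=1$).

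The intermediate range $\sigma^2(t)\in(\tfrac1{10m}e^{-\Delta m},\tfrac\Delta{10}]$ is the heart of the matter; here the deviation must come from $F_m(t)$ being atypically large. I would put $N=\lceil m\theta\rceil$ and work on the event $\{F_m(t)\ge N/m\}$. Since $p\le\tfrac12\theta$ we have $N\ge m\theta\ge mp$, and on that event
\[
|F_m(t)-F(t)|\ \ge\ \frac Nm-p\ \ge\ \theta-p\ \ge\ \tfrac12\theta ,
\]
so it remains only to bound $\P(F_m(t)\ge N/m)=\P(\mathrm{Bin}(m,p)\ge N)$ from below. If $m\theta\ge 1$ then $1\le N\le 2m\theta$ and $N\le m$, and the elementary estimates $\binom mN\ge(m/N)^N$ together with $(1-p)^{m-N}\ge e^{-2mp}$ (valid for $p\le\tfrac12$) give $\P(\mathrm{Bin}(m,p)\ge N)\ge(mp/N)^N e^{-2mp}$, whence, using $N\le 2m\theta$, $\tfrac N{mp}\le\tfrac{2\theta}{p}\le\tfrac{2u}{\log u}\le 2u$ (so $\log\tfrac N{mp}\le 2\log u$), $\theta\log u\le\Delta$ and $mp\le\Delta m/5$,
\[
\log\P(\mathrm{Bin}(m,p)\ge N)\ \ge\ -N\log\tfrac N{mp}-2mp\ \ge\ -4m\theta\log u-\tfrac25\Delta m\ \ge\ -5\Delta m .
\]
If instead $m\theta<1$, then $\log(\Delta/\sigma^2(t))>\Delta m$; combined with $\sigma^2(t)>\tfrac1{10m}e^{-\Delta m}$ (which forces $u<10m\Delta e^{\Delta m}$, hence $mp=\Delta m/u>\tfrac1{10}e^{-\Delta m}$) and with the resulting lower bound $u>e^{\Delta m-\log\frac{20}{19}}$ (hence $mp<\tfrac12$ once $c_0$ is large), one gets $N=1$, $\P(F_m(t)\ge 1/m)=1-(1-p)^m\ge\tfrac34 mp>\tfrac1{20}e^{-\Delta m}$, while on that event $|F_m(t)-F(t)|\ge\tfrac1m-p>\tfrac1{2m}>\tfrac12\theta$ (using $p<\tfrac1{2m}$ and $\theta<\tfrac1m$). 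In every case the relevant event has probability at least $2e^{-c_1\Delta m}$ for a suitable absolute $c_1$, provided $c_0$ is a large enough absolute constant, which would complete the proof.

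The one genuinely delicate point will be the uniformity of the estimate in the intermediate range: the quantity $N\log(N/(mp))$ has to stay of order $\Delta m$ both when $u$ is close to $5$ (where $N$ is comparable to $mp$ and the logarithm is $O(1)$) and when $u$ is exponentially large in $\Delta m$ (where $N$ is comparable to $m\theta\gg mp$ and the logarithm is of order $\log u\sim\Delta m$), and the relation $\theta\log u\le\Delta$ is exactly what reconciles the two extremes. The secondary nuisance is the degenerate case $m\theta<1$, where one cannot demand a fractional number of sample points below $t$ and must instead read off the deviation of order $1/m\gtrsim\theta$ from the presence of a single sample point, which occurs with the still-exponentially-small probability $\tfrac1{20}e^{-\Delta m}$.
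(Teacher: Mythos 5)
Your proof is correct and follows essentially the same strategy as the paper's: on the lowest range you observe that $F_m(t)=0$ with probability $\ge\tfrac12$ and then $|F_m(t)-F(t)|=F(t)\ge\sigma^2(t)$; on the intermediate range you lower-bound the probability that $F_m(t)$ is atypically large by evaluating a single binomial term $\binom{m}{k}p^k(1-p)^{m-k}$ via the elementary inequalities $\binom{m}{k}\ge(m/k)^k$ and $(1-p)^{m-k}\ge e^{-2mp}$, and then verify that $k\log\tfrac{k}{mp}+2mp\lesssim\Delta m$. The only cosmetic difference from the paper is in the bookkeeping of the case split: the paper partitions the intermediate range explicitly at the threshold $F(t)\sim\tfrac{\Delta}{5}e^{-\Delta m}$ (below which the target is $\sim\tfrac1m$ and a single sample point below $t$ suffices) and takes $k=4\Delta m/\log(\Delta/p)$, whereas you choose $k=N=\lceil m\theta\rceil$ and split according to whether $m\theta\ge1$ or $m\theta<1$; the case $m\theta<1$ is exactly the paper's $N=1$ subrange in disguise. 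Both arguments verify the same two structural inequalities ($p\le\tfrac12\theta$ and $\theta\log u\le\Delta$) and lead to the same constants up to minor numerical fiddling.
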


\begin{remark}
Just as in the proofs of Theorem \ref{thm:ratio.full.range} and Proposition \ref{prop:lower.full}, we restrict the presentation to absolutely continuous random variables and only consider the case $F(t)\leq \frac{1}{2}$---implying that  $\frac{1}{2}F(t)\leq \sigma^2(t)\leq F(t)$.
In particular,
\[ \left\{ t\in\R: F(t)\leq\frac{1}{2}, \, \sigma^2(t)\in[a,b] \right\}
\subset \left\{ t\in\R: F(t)\leq\frac{1}{2}, \, F(t)\in [a,2a] \right\}.\]

As will become clear in what follows, it is convenient to split the  range $( \frac{1}{10m} \exp(-\Delta m), \frac{\Delta}{10} ]$ to
\[ \left( \frac{1}{10m} \exp(-\Delta m) ,  \frac{\Delta}{10} \exp(-\Delta m)  \right] \cup \left( \frac{\Delta}{10} \exp(-\Delta m)  , \frac{\Delta}{10}   \right] ,\]
noting that if  $\Delta m\geq 10$ then
\begin{align}
\label{eq:error.1.m}
 \frac{\Delta}{\log(\Delta/\sigma^2(t)) }
\sim\frac{1}{m} 
\quad\text{for } \sigma^2(t) \in \left( \frac{1}{10m} \exp(-\Delta m) ,  \frac{\Delta}{10} \exp(-\Delta m)  \right] .
\end{align}
\end{remark}

\subsection{Proof of Theorem \ref{thm:ratio.full.range}}
\hfill

\vspace{0.5em}
\noindent
{\bf $\bullet$ The range 
 $F(t) \in \left( \frac{\Delta}{10} \exp(-\Delta m) ), \frac{\Delta}{5} \right]$.}
Since $\frac{\Delta}{F(t)}\geq 5$ and $\log(\lambda)\leq \lambda$ for $\lambda \geq 1$, we have that
$$
F(t) \leq \frac{\Delta}{\log(\Delta /F(t))}.
$$
Hence, it suffices to show that $F_m(t) \leq c \frac{\Delta}{\log(\Delta / F(t)  )}$.

For every $\ell\geq 1$, let $x_\ell\in\R$ satisfy $F(x_\ell)=2^{-\ell}\Delta$.
By Bennett's inequality (see, e.g., \cite[Theorem 2.9]{boucheron2013concentration}),  for every $\ell$, with probability at least $1-2\exp(-3\Delta m)$,
\begin{align}
\label{eq:bennett}
\left| F_m(x_\ell)  -  F(x_\ell) \right|
\leq c_1 \frac{\Delta }{ \log(\Delta / F(x_\ell) )}.
\end{align}
Moreover, if $\Delta m \geq 10$ then by the restriction on $F(t)$,
\[
\left|\left\{ \ell \in\N : F(x_\ell)=\Delta 2^{-\ell}  \geq \frac{\Delta}{10} \exp(-\Delta m) )\right\} \right|
\leq 2 \Delta m.
\]
Invoking the union bound over those integers $\ell$, it follows that with probability at least $1-2\exp(-\Delta m)$ \eqref{eq:bennett} holds uniformly.

Now fix a realization $(X_i)_{i=1}^m$ in that high probability event, set $t\in\R$ in the range in question and let $\ell\in\N$ for which  $t\in(x_{\ell+1},x_\ell]$.
By the monotonicity of $F_m $ and \eqref{eq:bennett},
\[ F_m(t)
\leq F_m( x_\ell)
\leq  (c_1+1) \frac{\Delta }{ \log(\Delta / F(x_\ell) )}
\leq c_2 \frac{\Delta}{ \log(\Delta / F(t))},\]
where the last inequality holds because $F(x_\ell)\leq 2 F( t)$.

\vspace{0.5em}
\noindent
{\bf $\bullet$ The range 
$F(t)\in\left(  \frac{1}{10m} \exp(-\Delta m), \frac{\Delta}{5} \exp(-\Delta m) \right]$.
}
Let $t_1 \in\R$ satisfy
$$
p=F( t_1) =\frac{\Delta}{5}\exp(-\Delta m).
$$
If $\lambda\geq 3$, then $\lambda\exp(-\lambda)\leq \exp(-\frac{\lambda}{2})$; hence  if $\Delta m \geq 3$, then 
\begin{align}
\label{eq:bound.p.proof}
pm\leq \frac{1}{5}\exp\left(-\frac{1}{2}\Delta m\right)
\leq \frac{1}{5}.
\end{align}
Denote by $\mathcal{A}_1$ the event in which $|\{ i : X_i\leq t_1\}| \leq  1$.

By \eqref{eq:error.1.m}, it suffices to show that $\P(\mathcal{A}_1)\geq 1-\exp(-c\Delta m)$.
Indeed, on that event,
$$
\sup_{ t \in \R \, : \,  F(t) \leq \frac{\Delta}{5} \exp(-\Delta m) } F_m(t) \leq F_m(t_1) \leq \frac{1}{m},
$$
and  obviously  $F(t) \leq F(t_1) \leq \frac{1}{5m}$.

Clearly,
\begin{align}
\nonumber
\P(\mathcal{A}_1^c)
=\sum_{\ell = 1}^m \binom{m}{\ell} p^\ell (1-p)^{m-\ell}
&\leq \sum_{\ell = 1}^m \exp\left( \ell \log\frac{em}{\ell} + \ell \log p \right)
\\
\label{eq:binom.estimate}
&=\sum_{\ell = 1}^m \exp\left(- \ell \log\frac{\ell}{em p}\right),
\end{align}
and since  $pm\leq \frac{1}{5}\exp(-\frac{1}{2}\Delta m)$, we have that 
\[
\ell \log\frac{\ell}{em p}
\geq \frac{1}{2}\ell\Delta m
\]
for every $\ell \geq 1$.
Comparing  \eqref{eq:binom.estimate} to a geometric progression shows that $\P(\mathcal{A}_1^c)\leq \exp(-c\Delta m)$.

\vspace{0.5em}
\noindent
{\bf $\bullet$ The range 
$F(t)\leq \frac{1}{5m} \exp(-\Delta m)$.}
Let $t_2$ satisfy that $p=F(t_2)=\frac{1}{5m} \exp(-\Delta m) $. 
In particular, $t\leq t_2$ for every $t$ in the range in question.
 Denote the event in which $|\{ i : X_i \leq t_2 \}| =0$ by $\mathcal{A}_2$.

Observe that
$\log(1-p)\geq -2p$ (since $p\leq \frac{1}{5}$) and that $\exp(-\lambda)\geq 1-\lambda$ for $\lambda\geq 0$. Hence,
\begin{align}
\label{eq:tiny}
\begin{split}
\P\left(\mathcal{A}_2 \right)
=(1-p)^m
&=\exp(m\log(1-p))\\
&\geq \exp\left( -2mp \right) \\
&\geq 1- 2mp
\geq 1-\exp(-\Delta m).
\end{split}
\end{align}
For every realization $(X_i)_{i=1}^m\in\mathcal{A}_2$ and $t\leq t_2$, we have $F_m(t)\leq F_m(t_2)=0$. 
Therefore,
\[  |F_m(t) - F(t) |
=F(t) \sim \sigma^2(t),  \]
which concludes the proof of Theorem \ref{thm:ratio.full.range}.
\qed

\vskip0.5cm
\subsection{ Proof of Proposition \ref{prop:lower.full}.}
\hfill

\vspace{0.5em}
\noindent
\vspace{0.5em}
\noindent
{\bf $\bullet$ The range 
 $F(t) \in \left( \frac{\Delta}{10} \exp(-\Delta m) ), \frac{\Delta}{5} \right]$.}
%
Fix $t$ in that range and set $p=F(t)$.
Since $\frac{\Delta}{p}\geq 1$ and $\log \lambda\leq \lambda$ for $\lambda\geq 1$, we have that
$$
p \leq \frac{ \Delta  }{ \log(\Delta / p)}.
$$
Moreover, set
\[
k= 4 \frac{ \Delta m }{ \log(\Delta / p)}
\]
and note that by the restriction on $p$,  $k\geq \max\{1,4mp\}$.

Let $\mathcal{B}_1$ be the event in which $|\{ i : X_i\leq t\} | \geq k$. It is straightforward to verify that
\begin{align*}
\P(\mathcal{B}_1)
&\geq \binom{m}{k} p^k (1-p)^{m-k} \\
&\geq \exp\left( k \log \frac{m}{k} + k \log p + (m-k)\log(1-p) \right)
\\
&\geq \exp\left( - k \log \frac{k}{mp}  - 2mp\right),
\end{align*}
where we used that $\log(1-p)\geq -2p$ and that $\frac{k}{mp} \geq 1$.

By the definition of $k$,
\[
 k \log\frac{k}{mp}
=  4\frac{ \Delta m }{ \log(\Delta / p)} \log \frac{ 4 \Delta  }{p \log(\Delta / p)}
\leq  8\Delta m,
\]
and recalling that $2 pm \leq \Delta m$ we have $\P(\mathcal{B}_1)\geq \exp(-9\Delta m)$.

Finally, for a realization $(X_i)_{i=1}^m\in\mathcal{B}_1$,
\begin{align*}
 F_m(t) -F(t)
\geq \frac{k}{m} - p
&= 4 \frac{ \Delta  }{ \log(\Delta / p)} - p
\\
&\geq 3\frac{ \Delta  }{ \log(\Delta / p)}
\sim\frac{ \Delta  }{ \log(\Delta / \sigma^2(t))},
\end{align*}
as claimed.

\vspace{0.5em}
\noindent
{\bf $\bullet$ The range 
$F(t)\in\left(  \frac{1}{10m} \exp(-\Delta m), \frac{\Delta}{5} \exp(-\Delta m) \right]$.
}
%
Fix $t$ in that range and set $p=F(t)$.
Let $\mathcal{B}_2$ be the event in which $|\{ i : X_i\leq t\} | \geq 1$, i.e.,
\begin{align*}
\P(\mathcal{B}_2)
=mp
\geq \frac{1}{10}\exp(-\Delta m).
\end{align*}

On the other hand, it follows from the restriction on $p$ that  if $\Delta m\geq 3$ then $F(t)=p\leq \frac{1}{5m}$ (see \eqref{eq:bound.p.proof}).
Therefore,  in $\mathcal{B}_2$,
\[ |F_m(t) - F(t) |
\geq \frac{1}{m}-F(t)
\geq \frac{4}{5m }.
 \]
By \eqref{eq:error.1.m}, the wanted estimate follows.

\vspace{0.5em}
\noindent
{\bf $\bullet$ The range 
$F(t)\leq \frac{1}{5m} \exp(-\Delta m)$.}
%
Fix $t$ in that range.
It was shown in \eqref{eq:tiny} that with probability at least $1-\exp(-\Delta m)$, $|\{ i : X_i\leq t\}|=0$.
In that event, $F_m( t)=0$ and
\[  |F_m(t) - F(t)|
=F(t)
\geq \sigma^2(t) \]
as claimed.
This concludes the proof of Proposition \ref{prop:lower.full}.
\qed

\vspace{1em}
\noindent
\textsc{Acknowledgements:}
The first author is grateful for financial support through the Austrian Science Fund (FWF) projects ESP-31N and P34743N.

\bibliographystyle{plain}

\end{document}